\newtheorem{Theorem}{Theorem}
\newtheorem{Lemma}{Lemma}[section]
\newtheorem{Corollary}[Lemma]{Corollary}
\newtheorem{Proposition}[Lemma]{Proposition}
\numberwithin{equation}{section}
\newcommand\QQ{{\mathbb{Q}}}
\newcommand\ZZ{{\mathbb{Z}}}
\newcommand\NN{{\mathbb{N}}}
\newcommand\A{\mathcal{A}}
\DeclareMathOperator\Aut{Aut}
\DeclareMathOperator\Frac{Frac}
\DeclareMathOperator\Spec{Spec}
\DeclareMathOperator\SL{SL}
\DeclareMathOperator\GL{GL}
\newcommand\wt[1]{{(#1)}}
\newcommand\der[1]{\tfrac{\partial}{\partial#1}}
\DeclarePairedDelimiter\lin{\langle}{\rangle}
\DeclarePairedDelimiter\abs{\lvert}{\rvert}
\renewcommand\epsilon{\varepsilon}
\newcommand\psm[1]{\begin{psmallmatrix}#1\end{psmallmatrix}}
\newcommand\sym{\mathrm{sym}}
\newlist{thmlist}{enumerate}{1}
\setlist[thmlist]{
  nolistsep,
  ref={\mdseries\emph{(\roman*)}},
  label=\emph{(\roman*)}, 
  }
\newcommand\thmitem[1]{\textup{(\textit{\romannumeral #1})}}
\newlength\circwidth
\newcommand\circitem[1]{\textup{(%
  \makebox[\circwidth]{%
    \ifcase#1%
    \or\ding{168}%
    \or\ding{171}%
    \or\ding{169}%
    \or\ding{171}%
    \fi
  }%
)}}
\begin{document}

\title[Automorphisms and isomorphism]{Automorphisms and isomorphism of
quantum generalized Weyl algebras}

\author{Mariano Su\'arez-Alvarez}
\email{mariano@dm.uba.ar}

\author{Quimey Vivas}
\address{Departamento de Matemática.
Facultad de Ciencias Exactas y Naturales.
Universidad de Buenos Aires.
Ciudad Universitaria, Pabellón I.
(1428) Ciudad de Buenos Aires (Argentina)}
\email{qvivas@dm.uba.ar}

\subjclass[2010]{Primary 16S32; Secondary 16W20 17B37}

\date{December 26, 2012}


\begin{abstract}
We classify up to isomorphism the quantum generalized Weyl algebras and
determine their automorphism groups in all cases in a uniform way,
including where the parameter~$q$ is a root of unity, thereby completing
the results obtained by [Bavula, V. V.; Jordan, D. A. Isomorphism problems
and groups of automorphisms for generalized Weyl algebras. Trans. Amer.
Math. Soc. 353 (2001), no. 2, 769--794] and [Richard, L.; Solotar, A.
Isomorphisms between quantum generalized Weyl algebras. J. Algebra Appl. 5
(2006), no. 3, 271--285]
\end{abstract}

\maketitle

\section*{Introduction}

If $k$ is a field, $q\in k\setminus\{0,1\}$, $D$ is one of $k[h]$
or $k[h^{\pm1}]$ and $a\in D\setminus0$, the \emph{quantum generalized
Weyl algebra} $A=\A(D,q,a)$ is the $k$-algebra freely generated by
letters $y$,~$x$, $h$ (and its inverse  $h^{-1}$ when $D=k[h^{\pm1}]$)
subject to the relations 
  \begin{align*}
  & hy = qyh,    && xh = qhx,    && yx = a(h),   && xy = a(qh).
  \end{align*}
This construction, a special case of a general one introduced by
V.V.\,Bavula in~\cite{B}, provides an interesting class of algebras
containing the quantum plane, the quantum Weyl algebra, certain
well-known quotients of the quantum enveloping algebra $\mathcal
U_q(\mathfrak{sl}_2)$ related to the primitive quotients of the
classical enveloping algebra~$\mathcal U(\mathfrak{sl}_2)$, studied by
J.\,Alev and F.\,Dumas in~\cite{AD:2}, some invariant subalgebras of
these under finite group actions, the so-called ambiskew polynomial
rings, and several other examples. They have notably appeared also
under the name of non-commutative deformations of Kleinian
singularities of type $A$ in work of T.J.\,Hodges~\cite{H} and are,
in~fact, somewhat ubiquitous.

It is the purpose of this paper to present a solution to the problem
---initially posed by Hodges in~\cite{H} in general--- of determining which
pairs of quantum generalized Weyl algebras are isomorphic, and to
describe the automorphism groups of these algebras. Our first result
solves the isomorphism problem:

\begin{Theorem}\label{thm:isos}
The two algebras $A_1=\A(D,q_1,a_1)$ and $A_2=\A(D,q_2,a_2)$, with
$a_1$ and $a_2$ non-units, are isomorphic if and only if
$q_2\in\{q_1,q_1^{-1}\}$ and there exist a unit $\alpha\in D$, a
non-zero scalar~$\beta\in k$ and $\epsilon\in\{\pm 1\}$ such that
$a_2(h)=\alpha a_1(\beta h^\epsilon)$. If~$D=k[h]$ then necessarily
$\epsilon=1$ and $\alpha \in k^\times$.
\end{Theorem}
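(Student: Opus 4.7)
The ``if'' implication is a matter of writing down explicit isomorphisms for each allowed configuration. For $q_2=q_1$, $\epsilon=1$, and $a_2(h)=\alpha a_1(\beta h)$, the assignment $h\mapsto\beta h$, $x\mapsto x$, $y\mapsto \alpha\, y$ (suitably normalized so that $yx\mapsto a_2(h)$) extends to an isomorphism; verification of the defining relations is routine. The case $q_2=q_1^{-1}$ is obtained by precomposing with the involution $x\leftrightarrow y$, which identifies $\A(D,q,a)$ with $\A(D,q^{-1},a)$. In the Laurent case, the additional degree of freedom $\epsilon=-1$ is realized by composing with the automorphism of $D=k[h^{\pm1}]$ given by $h\mapsto h^{-1}$; this inverts $q$ under conjugation and must be accompanied by the swap $x\leftrightarrow y$ to land in $\A(D,q_1,a_1)$ again.

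\textbf{Hard direction.} For ``only if'', fix an isomorphism $\varphi\colon A_1\to A_2$. The strategy is to prove in turn: \emph{(a)} $\varphi(D_1)=D_2$; \emph{(b)} $\varphi|_{D_1}$ is necessarily of the form $h\mapsto\beta h^\epsilon$ for some $\beta\in k^\times$ and $\epsilon\in\{\pm1\}$, with $\epsilon=1$ forced when $D=k[h]$ (since $h\mapsto h^{-1}$ does not extend to the polynomial ring); \emph{(c)} this identification of $D_1$ with $D_2$ is equivariant for the natural $\ZZ$-grading actions, forcing $q_2=q_1^\epsilon$; \emph{(d)} the images $\varphi(x)$ and $\varphi(y)$ are normal elements of degrees $\pm1$ in the grading of $A_2$, hence of the shape $u\cdot x$ and $v\cdot y$ (possibly swapped, in accordance with step \emph{(c)}) for some $u,v\in D_2$; and \emph{(e)} applying $\varphi$ to the relation $yx=a_1(h)$ yields $a_2(h)=\alpha\, a_1(\beta h^\epsilon)$ for some $\alpha\in D_2^\times$ arising as a product of $u$, $v$, and a $q$-correction. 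In the polynomial case, $D_2^\times=k^\times$ forces $\alpha\in k^\times$.

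\textbf{Main obstacle.} The crux is step \emph{(a)}: recognizing $D$ intrinsically inside $A$. When $q$ is not a root of unity, conjugation by $h$ acts on the degree-$n$ part as multiplication by $q^{-n}$, so $D$ is exactly the centralizer of $h$; this description is manifestly invariant, and steps \emph{(b)}--\emph{(e)} follow quickly. At a root of unity, however, the centralizer of $h$ becomes much larger, and a more refined invariant is needed---for instance a characterization of $D$ via prime/normal ideals, or as a distinguished maximal commutative subalgebra detected by its rank or its behavior under localization. I expect the paper to develop such a uniform invariant characterization, using crucially the hypothesis that $a$ is a non-unit (which prevents $x$ and $y$ from becoming invertible and thereby excludes the ``quantum torus'' pathology in which the role of $D$ inside $A$ is not canonical), after which the remaining steps \emph{(b)}--\emph{(e)} amount to the formal manipulations sketched above.
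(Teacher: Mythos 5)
Your outline of the ``only if'' direction stops exactly at the point where the real work lies, and you say so yourself: step \emph{(a)}, recognizing $D$ (equivalently, the $\ZZ$-grading) intrinsically inside $A$ when $q$ is a root of unity, is left as a conjecture that ``the paper develops such a uniform invariant characterization.'' None of the candidates you float (centralizer of $h$, prime or normal ideals, a distinguished maximal commutative subalgebra) is carried out, and the first one fails at roots of unity, as you note. Since the whole point of the theorem is to remove the non-root-of-unity hypothesis of Richard--Solotar, this is a genuine gap rather than a routine omission: without \emph{(a)} the later steps have nothing to stand on. Moreover, even granting \emph{(a)} and \emph{(b)}, your step \emph{(d)} quietly assumes that $\varphi$ respects the $\ZZ$-grading (so that $\varphi(y)$, $\varphi(x)$ sit in the components of weight $\pm1$); knowing only $\varphi(D_1)=D_2$ does not give this, so an additional argument is needed there as well.

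For comparison, the paper's mechanism is derivation-theoretic rather than subalgebra-theoretic. One first shows $A$ has no non-zero locally nilpotent homogeneous derivations, deduces that every locally finite derivation is homogeneous of weight zero, and classifies them: they are scalar multiples of the Eulerian derivation $\xi=y\der{y}-x\der{x}$, except when $a$ is a monomial, where $\tau=h\der{h}+Nx\der{x}$ also appears. Since conjugation by an isomorphism preserves local finiteness, $\eta\circ\xi_1\circ\eta^{-1}$ must be a combination of $\xi_2$ (and $\tau_2$), and in the monomial case a separate argument with the $\ZZ^2$-graded semigroup $\Lambda_i$ pins down the transition matrix, giving $\eta\circ\xi_1\circ\eta^{-1}\doteq\xi_2$ (this is where the non-unit hypothesis on $a$ enters, exactly as you guessed). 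Because the eigenspaces of $\xi$ are the homogeneous components and $\ker\xi=D$, this single statement delivers your \emph{(a)}, the grading-compatibility needed for \emph{(d)}, and then \emph{(b)}, \emph{(c)}, \emph{(e)} follow by the elementary manipulations you describe (Proposition~\ref{prop:iso-h} and the short computation with $yx=a_1(h)$ and $hy=q_1yh$). If you want to complete your proof, you should either prove an invariant characterization of $D$ valid at roots of unity or adopt this derivation-eigenvalue argument; as it stands, the central difficulty is identified but not resolved.
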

 
Let us single out two interesting special cases of this theorem. If
$q\in k\setminus\{0,1\}$, the \emph{quantum  plane} is the algebra
$M_q=k\lin{x,y:yx=qxy}$ and the \emph{quantum Weyl algebra} is the
algebra $A^1_q(k)=k\lin{x,y:yx-qxy=1}$. As $M_q\cong\A(k[h],q,h)$ and
$A_q^1(k)\cong\A(k[h],q,h-1)$, Theorem~\ref{thm:isos} tells us that
for all $q_1$,~$q_2\in k\setminus\{0,1\}$ we have $M_{q_1}\cong
M_{q_2}$ iff $A_{q_1}^1(k)\cong A_{q_2}^2(k)$ iff
$q_2\in\{q_1,q_1^{-1}\}$. This characterization of isomorphisms
between quantum planes and between quantum Weyl algebras had been
established when the parameters are \emph{not} roots of unity by
Alev and Dumas in~\cite{AD}.

\begin{Theorem}\label{thm:autos-1}
Let $A=\A(k[h],q,a)$ be a quantum generalized Weyl algebra with $a$
not a unit, let $N=\deg a$ and write $a=\sum_{i=0}^Na_ih^i$, and let
  \(
  g=\gcd\{i-j:a_ia_j\neq 0\}
  \)
and $C_g\subseteq k^\times$ be the subgroup of $g$th roots of unity;
if~$a$~is a monomial, we make the convention that~$g=0$ and
$C_g=k^\times$. If $(\gamma,\mu)\in C_g\times k^\times$, there is an
automorphism $\eta_{\gamma,\mu}:A\to A$ such that
$\eta_{\gamma,\mu}(y)=\mu y$, $\eta_{\gamma,\mu}(h)=\gamma h$ and
$\eta_{\gamma,\mu}(x)=\mu^{-1}\gamma^N x$. The~set
$G=\{\eta_{\gamma,\mu}:(\gamma,\mu)\in C_g\times k^\times\}$ is a
subgroup of~$\Aut(A)$ isomorphic to~$C_g\times k^\times$. 
\begin{thmlist}

\item If $q\neq-1$, we in fact have $\Aut(A)=G$, and

\item if $q=-1$, there is a right split short exact sequence of groups
  \[
  \xymatrix{
  1 \ar[r]
    & G \ar@{^(->}[r]
    & \Aut(A) \ar[r]
    & \ZZ/2\ZZ \ar[r]
    & 1
  }\]
The cyclic group~$\ZZ/2\ZZ$ appearing here is generated by the image
of the involutory automorphism $\Omega:A\to A$ such that
$\Omega(y)=x$, $\Omega(h)=-h$ and~$\Omega(x)=y$.

\end{thmlist}
\end{Theorem}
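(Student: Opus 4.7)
The plan is to proceed in three stages: construction of the listed automorphisms, a rigidity argument that these are all of them, and assembly of the group structure in part~\thmitem{2}.

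\textbf{Construction.} For $\eta_{\gamma,\mu}$ I would prescribe its values on the generators and verify the four defining relations: the commutation relations $hy=qyh$ and $xh=qhx$ are preserved automatically because every generator is merely rescaled, and the multiplication relations $yx=a(h)$ and $xy=a(qh)$ both pull back to the single identity $\gamma^N a(h)=a(\gamma h)$; writing $a=\sum a_i h^i$ and matching coefficients, this is $\gamma^{N-i}=1$ for every $i$ with $a_i\ne 0$, equivalently $\gamma^g=1$, i.e.\ $\gamma\in C_g$. A direct computation gives $\eta_{\gamma,\mu}\eta_{\gamma',\mu'}=\eta_{\gamma\gamma',\mu\mu'}$, and $(\gamma,\mu)\mapsto\eta_{\gamma,\mu}$ is injective because its values at $h$ and~$y$ already recover the parameters, so $G\cong C_g\times k^\times$. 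For the involution of~\thmitem{2}: when $q=-1$ the identities $q=q^{-1}$ and $a(-h)=a(qh)$ both hold, so the four relations are easily seen to be preserved under $\Omega$; $\Omega^2=\mathrm{id}$ is immediate, and $\Omega$ lies outside $G$ because $\Omega$ interchanges $A_{1}$ and $A_{-1}$ in the natural $\ZZ$-grading $|x|=1$, $|y|=-1$, $|h|=0$, whereas every $\eta_{\gamma,\mu}$ preserves that grading.

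\textbf{Rigidity.} Let $\phi\in\Aut(A)$ and set $H=\phi(h)$, $Y=\phi(y)$, $X=\phi(x)$; the triple generates $A$ and satisfies the GWA relations. My plan is to exploit the $\ZZ$-grading via two intermediate claims: \textbf{(a)} $H$ is homogeneous of degree~$0$, so $H\in k[h]$; and \textbf{(b)} $Y$ and $X$ are homogeneous of opposite degrees $\pm 1$. For (a), I would decompose $H$, $Y$, $X$ into graded components and examine the top- and bottom-degree parts of $HY=qYH$, $XH=qHX$, $YX=a(H)$ and $XY=a(qH)$; using the identities $x^nh^k=q^{nk}h^kx^n$ and $y^nh^k=q^{-nk}h^ky^n$ to track how multiplication transports $H$ through each graded piece, and the fact that $A$ is a domain, one pins the support of $H$ to $\{0\}$. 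Given (a), each graded component $Y_n$ of $Y$ leads to the identity $H(h)=qH(q^n h)$ in $k[h]$, which forces $q^{1+nk}=1$ for every $k$ in the support of $H$ and rules out any constant term in $H$; after pairing with the analogous constraint coming from $X$ and with $YX=a(H)\in k[h]$, only two configurations survive: $Y\in A_{-1}$, $X\in A_1$ (the ``straight'' case), giving $\phi\in G$, and $Y\in A_1$, $X\in A_{-1}$ (the ``swap'' case), which requires $q=q^{-1}$ hence $q=-1$ and yields $\phi=\Omega\circ\eta$ for some $\eta\in G$. Matching the scalars in each case recovers the parameter constraints (in particular $\gamma\in C_g$ and the relation $\nu=\mu^{-1}\gamma^N$) imposed in the construction.

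\textbf{Assembly.} Part~\thmitem{1} follows because the swap case is vacuous when $q\ne -1$. For part~\thmitem{2}: every automorphism lies in $G\cup G\Omega$, the map $\Aut(A)\to\ZZ/2\ZZ$ that is trivial on $G$ and sends $\Omega$ to the generator is a well-defined surjection with kernel $G$, and $\lin{\Omega}\hookrightarrow\Aut(A)$ supplies a right splitting because $\Omega^2=\mathrm{id}$. The principal obstacle throughout is claim~(a), the homogeneity of $H$: once $H$ is confined to $k[h]$, the remaining analysis reduces to elementary calculations in a polynomial ring, but establishing (a) uniformly---in particular treating the case when $q$ is a root of unity on the same footing as the generic case, where centralizers are larger---is where the bulk of the work resides.
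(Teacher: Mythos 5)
Your construction of $G$ and of $\Omega$, and the final assembly of the split exact sequence from the dichotomy ``every automorphism lies in $G\cup G\Omega$'', are correct and agree with the paper. The problem is the rigidity stage: your claims (a) and (b) --- that $\phi(h)$ lies in $k[h]$ (indeed is a scalar multiple of $h$) and that $\phi(y),\phi(x)$ are homogeneous of weights $\pm1$ with the stated scalar constraints --- are exactly the substantive content of the theorem, and your text only announces a plan for them (``examine the top- and bottom-degree parts \dots one pins the support of $H$ to $\{0\}$'') without carrying it out; you yourself flag this as ``where the bulk of the work resides''. This is a genuine gap, not a routine verification. A leading/trailing-term analysis of $HY=qYH$, $XH=qHX$, $YX=a(H)$, $XY=a(qH)$ does not by itself confine the support of $H$: for homogeneous components $u=y^mf(h)$, $v=y^ng(h)$ the relation $uv=qvu$ only yields $f(q^{-n}h)g(h)=q\,g(q^{-m}h)f(h)$, which has many non-trivial solutions precisely when $q$ is a root of unity (the case the theorem is meant to cover), so degree bookkeeping in a domain does not force $m=0$. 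Moreover claim (a) is simply false without the hypothesis that $a$ is not a unit --- for $a=1$ the map with $\eta(h)=hx$, $\eta(x)=x$, $\eta(y)=y$ is an automorphism with $\eta(h)$ of nonzero weight --- so any correct argument must invoke the non-unitness of $a$ at a specific point, and your sketch never identifies where or how.

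For comparison, the paper gets (a) and (b) from a two-step derivation-theoretic argument: first one shows $\A(D,q,a)$ has no nonzero locally nilpotent homogeneous derivations and classifies all locally finite derivations (they are spanned by the Eulerian derivation $\xi=y\der{y}-x\der{x}$, plus $\tau=h\der{h}+Nx\der{x}$ in the monomial case); then one proves (Proposition~\ref{prop:euler}) that any isomorphism conjugates $\xi$ into a scalar multiple of $\xi$ --- the monomial case requiring a separate analysis of the $\ZZ^2$-grading and its indecomposable weights, and the non-unit hypothesis entering essentially here. Only then does one conclude (Proposition~\ref{prop:iso-h}) that $\eta(h)=\gamma h$ and $\eta$ either fixes or swaps the lines $ky$, $kx$, after which your ``Construction'' and ``Assembly'' computations finish the proof as you describe. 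If you want a proof along your direct graded-component lines, you would need to supply an argument replacing this derivation machinery, and in particular to explain how non-unitness of $a$ and the root-of-unity case are handled; as written, the proposal does not prove the theorem.
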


\noindent To state the analog of this theorem for the case where
$D=k[h^{\pm1}]$, we need a definition. We say that a Laurent
polynomial $f\in k[h^{\pm 1}]$ is \textit{symmetric} if there exist
$l\in\NN$, $\gamma\in k$ and $\delta\in k$ such that
$\delta f(h)=h^l  f(\gamma h^{-1})$.

\begin{Theorem}\label{thm:autos-2}
Let $A=\A(k[h^{\pm1}],q,a)$ be a quantum generalized Weyl algebra, with
$a=\sum_{i\in I}a_ih^i$ a non-unit in $k[h^{\pm1}]$, and let
  \(
  g=\gcd\{i-j:a_ia_j\neq 0\}
  \)
and $C_g\subseteq k^\times$ be the subgroup of $g$th roots of unity;
fix $i_0\in I$. If $(\gamma,\mu)\in C_g\times k^\times$, there is an
automorphism $\eta_{\gamma,\mu}:A\to A$ such that
$\eta_{\gamma,\mu}(y)=\mu y$, $\eta_{\gamma,\mu}(h)=\gamma h$ and
$\eta_{\gamma,\mu}(x)=\mu^{-1}\gamma^{i_0} x$. The set
$G=\{\eta_{\gamma,\mu}:(\gamma,\mu)\in C_g\times k^\times\}$ is a
subgroup of~$\Aut(A)$ isomorphic to~$C_g\times k^\times$. Consider the
subgroup $K$ of $\Aut A$ of all automorphisms $\eta$ such that
$\eta(h)$ is a scalar multiple of $h$.
\begin{thmlist}

\item If $a$ is symmetric then $\Aut A\cong K \ltimes \ZZ/2\ZZ$ and,
if not, $\Aut(A)=K$.

\item If $q=-1$ then $K\cong G\ltimes \ZZ/2\ZZ$ and otherwise $K=G$.

\end{thmlist}
\end{Theorem}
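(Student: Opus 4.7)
My plan is to follow the structure of the polynomial-case analog, Theorem~\ref{thm:autos-1}, using Theorem~\ref{thm:isos} applied to the identity isomorphism type $A\cong A$ as the essential engine. The first step is to verify that each $\eta_{\gamma,\mu}$ with $(\gamma,\mu)\in C_g\times k^\times$ is a well-defined automorphism: the homogeneous relations $hy=qyh$ and $xh=qhx$ are trivially preserved, and $yx=a(h)$ becomes $\gamma^{i_0}a(h)=a(\gamma h)=\sum_{i\in I}\gamma^i a_ih^i$, which is equivalent to $\gamma^{i-i_0}=1$ for every $i\in I$ and hence to $\gamma\in C_g$ by definition of $g$. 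The map $(\gamma,\mu)\mapsto\eta_{\gamma,\mu}$ is then a group homomorphism with trivial kernel, since $\eta(h)$ and $\eta(y)$ recover $\gamma$ and $\mu$.

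To prove part~(ii), I would take $\eta\in K$ with $\eta(h)=\gamma h$ and exploit the canonical $\ZZ$-grading on $A$ given by $\deg y=1$, $\deg x=-1$, $\deg h=0$. Inner conjugation by $h$ acts as $q^n$ on the degree-$n$ piece, and the same holds for conjugation by $\eta(h)=\gamma h$, so $\eta$ commutes with that inner automorphism and therefore preserves its eigenspaces: the individual $A_n$ when $q$ is not a root of unity and coarser direct sums otherwise. Since $\eta(y)\eta(x)=a(\gamma h)$ lies in $D=A_0$ and $A$ is a domain, $\eta(y)$ and $\eta(x)$ must have opposite degrees; pushing this through the remaining relations then forces $\eta(y)=\mu y$ and $\eta(x)=\mu^{-1}\gamma^{i_0}x$, so $\eta\in G$, \emph{except} when $q=-1$, where $A_1$ and $A_{-1}$ fall in the same eigenspace of conjugation by $h$ and may be exchanged. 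In that exceptional case, the candidate $\Omega$ with $\Omega(y)=x$, $\Omega(x)=y$, $\Omega(h)=-h$ defines an involutive automorphism in $K\setminus G$ (verified using $hx=-xh$ at $q=-1$), producing the split extension $K\cong G\ltimes\ZZ/2\ZZ$.

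For part~(i), I would invoke Theorem~\ref{thm:isos} applied to $\eta\colon A\to A$: there must exist $\alpha\in D^\times$, $\beta\in k^\times$ and $\epsilon\in\{\pm1\}$ with $a(h)=\alpha\,a(\beta h^\epsilon)$, and the proof of that theorem additionally shows $\eta(h)$ is a scalar multiple of $h^\epsilon$, so the sign $\epsilon(\eta)$ is a well-defined homomorphism $\Aut A\to\ZZ/2\ZZ$ with kernel $K$. When $\epsilon=-1$, writing $\alpha=\delta^{-1}h^l$ with $l\in\NN$ (using $D^\times=k^\times\cdot h^\ZZ$ and absorbing an appropriate monomial) converts $a(h)=\alpha\,a(\beta h^{-1})$ into $\delta a(h)=h^l a(\beta h^{-1})$, which is exactly the symmetry condition with $\gamma=\beta$. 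Conversely, given symmetry data $(l,\gamma,\delta)$ for $a$, one constructs an involutive lift $\Psi$ with $\Psi(h)=\gamma h^{-1}$ and $\Psi(y),\Psi(x)$ determined (up to composition with an element of $G$) by requiring $\Psi(y)\Psi(x)=a(\gamma h^{-1})$ to match $a(h)$ after substitution; this splits the sequence and yields $\Aut A\cong K\ltimes\ZZ/2\ZZ$, while in the non-symmetric case $\Aut A=K$.

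The principal obstacle I anticipate is the grading analysis in part~(ii) when $q$ is a root of unity of order $d>2$: many mixed-degree candidates for $\eta(y)$ satisfy the commutation relation with $h$, and ruling them out needs a careful use of the constraint $\eta(y)\eta(x)\in D$ together with unique factorization arguments in the domain $A$. A secondary difficulty is arranging that the lift $\Psi$ in the symmetric case squares to the identity rather than merely landing in $K$, which may require renormalizing by a suitable element of $G$ extracted from the ambiguity in the symmetry data $(l,\gamma,\delta)$.
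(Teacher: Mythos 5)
Your construction of the $\eta_{\gamma,\mu}$, the identification $G\cong C_g\times k^\times$, and your part~(i) are essentially the paper's route: the sign $\epsilon(\eta)$ determined by $\eta(h)\doteq h^{\pm1}$ (via Theorem~\ref{thm:isos}/Proposition~\ref{prop:iso-h}) gives a homomorphism $\Aut A\to\ZZ/2\ZZ$ with kernel $K$, whose image is nontrivial exactly when $a$ is symmetric; the paper packages this in the lemma that $a$ is symmetric iff some automorphism sends $h$ to a multiple of $h^{-1}$. (Your worry about making the lift an involution is legitimate, and applies to the paper's $\Omega_\sym$ as well.)

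The genuine gap is in part~(ii), in two places. First, your degree control rests on the eigenspaces of conjugation by $h$; when $q$ is a root of unity of order $e$ these are only the sums $\bigoplus_{n\equiv r\ (e)}A^{\wt n}$, and you yourself flag that you cannot rule out mixed-degree images of $y$ --- but that is precisely the case the paper exists to treat, and ``unique factorization arguments'' is not a proof. The paper obtains the needed rigidity from the locally finite derivation analysis (Propositions~\ref{prop:lfd}, \ref{prop:euler}, \ref{prop:iso-h}), which you already invoke for part~(i), so you should simply use it here too; alternatively, for $\eta\in K$ there is a cheap fix you do not exploit: $\eta(y)\eta(x)=a(\gamma h)$ is a nonzero element of weight zero in the $\ZZ$-graded domain $A$, so comparing top and bottom homogeneous components forces $\eta(y)$, $\eta(x)$ to be homogeneous of opposite weights, and surjectivity forces those weights to be $\pm1$.

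Second, and more seriously, the step ``pushing this through the remaining relations then forces $\eta(y)=\mu y$ with $\mu\in k^\times$'' cannot be carried out: over $D=k[h^{\pm1}]$ homogeneity only yields $\eta(y)=y\mu$ with $\mu\in D^\times=k^\times h^{\ZZ}$, and the relations do not kill the monomial factor. Concretely, the assignment $y\mapsto yh$, $h\mapsto h$, $x\mapsto h^{-1}x$ preserves all four defining relations, has the evident inverse $y\mapsto yh^{-1}$, $h\mapsto h$, $x\mapsto hx$, and lies in $K$ while being no $\eta_{\gamma,\mu}$, for every $q$ and every non-unit $a$. So no argument of the kind you sketch can force the scalar form, and your plan as written establishes at most that elements of $K$ have the shape $y\mapsto c\,yh^s$, $h\mapsto\gamma h$, $x\mapsto c^{-1}\gamma^{i_0}h^{-s}x$ (up to the $q=-1$ swap). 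This is exactly the point where the paper's own proof passes from Proposition~\ref{prop:iso-h} (which only gives $\mu,\nu\in D^\times$) to the assertion $\eta(y)\doteq y$; any complete write-up of part~(ii) as stated must explain how such automorphisms are disposed of, and yours currently does not.
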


\noindent To avoid complicating statements and proofs, we have chosen
to postpone to the end of the paper the results in the line of these
three theorems for the case in which the polynomial $a$ is invertible
in the ring $D$.

\medskip

The results corresponding to these theorems for the case of classical
generalized Weyl algebras ---in which ``there is no $q$''--- have been
given by Bavula and Jordan in~\cite{BJ} and the quantum case
as above but with~$q$ not a root of unity has been solved for
$D=k[h]$ by L.\,Richard and A.\,Solotar in~\cite{RS} and for
$D=k[h^{\pm1}]$ by Bavula and Jordan also in~\cite{BJ}.

Our approach makes no hypothesis on the scalar parameter, and it is
interesting to remark one key point which makes the difference.
In~\cite{AD}, Alev and Dumas attached to a $k$-algebra~$\Lambda$ the
subgroup~$G(\Lambda)=(\Lambda^\times)'\cap k^\times\subseteq k^\times$
---where $(\Lambda^\times)'$ is the derived subgroup of the group of
units of~$\Lambda$--- and showed that if $k_q(x,y)$ denotes the
quantum Weyl field we have $G(k_q(x,y))=\lin{q}$, the cyclic subgroup
generated by~$q$. Richard and Solotar prove that the fraction field of
a quantum generalized Weyl algebra $A=\A(q,a)$ is isomorphic
to~$k_q(x,y)$ and, since in their situation $q$ is not a root of
unity, notice that one can recover~$q$ from~$A$, up to inversion, as
one of the two generators of~$G(\Frac A)$. If instead $q$~has finite
order in~$k^\times$, the subgroup~$\lin{q}$ has many generators and
their approach cannot get started. We replace below their
consideration of~$G(\Frac A)$ by a detailed study of certain
derivations of~$A$ and their eigenvalues, and this avoids that
difficulty: in a very loose sense, this is like ``taking the
logarithm'' of $G(\Frac A)$. Similar difficulties with parameters of
finite order appear when trying to classify other classes of algebras,
like that of down-up algebras introduced by G.\,Benkart and T.\,Roby
in~\cite{BR}, and one can hope that similar ideas may possibly
overcome these too.

In~\cite{SSAV}, together with A. Solotar, we computed the Hochschild
cohomology of quantum generalized Weyl algebras defined over $k[h]$.
The results of the present paper arose in the process of studying the
algebraic structure of the cohomology ---the cup product and the
Gerstenhaber bracket.

\medskip

We finish by emphasizing that the theorems stated above, as well as
all the related work we referred to, exclude the case where~$q=1$,
which is precisely that in which the algebras are commutative. When
$D=k[h]$, the problem of determining the automorphisms is
that of finding the automorphism group of the affine surface $\Spec
k[x,y,h]/(xy-a(h))$. L.\,Makar-Limanov gave in~\cite{ML} explicit
generators for these groups and recently J.\,Blanc and A.\,Dubouloz
showed in~\cite{BD} that they have an amalgamated product structure
similar to that of~$\Aut(k[x,y])$ described by the classical theorems
of L.\,Makar-Limanov,  H.W.E.\,Jung and W.\,van der Kulk, and that the
surfaces are classified under isomorphism exactly as in
Theorem~\ref{thm:isos}. While Makar-Limanov deals systematically with
locally nilpotent derivations, as we do, the methods with which these
commutative results are obtained are quite different from ours ---the
work~\cite{BD}, for example, is a paper on algebraic geometry.

\section{Preliminaries}
\label{prel}

We fix a field~$k$ of characteristic zero and identify $\QQ$ with its
prime field. If $q\in k$, $D$ is one of $k[h]$ or $k[h^{\pm 1}]$ and
$a\in D$, the \emph{quantum generalized Weyl algebra} $A=\A(D,q,a)$ is
the $k$-algebra freely generated by letters $y$,~$x$,~$h$ (and
$h^{-1}$ when $D=k[h^{\pm 1}]$) subject to the relations; 
  \begin{align*}
  & hy = qyh,    && xh = qhx,    && yx = a(h),   && xy = a(qh).
  \end{align*}
The set $\{y^ih^jx^k:ik=0\}$ is a $k$-basis of~$A$; we call its
elements \emph{standard monomials}. The algebra is a domain iff
$q\neq0$ and $a\neq0$: we will always assume this is the case. We will
moreover suppose throughout that $q\neq1$, thereby excluding all the
commutative examples and no other.

We write $a=\sum_{i=M}^N a_i h^i$ with $a_Ma_N\neq 0$. Notice that if
$a$ is a unit, that is, if $M=N=0$ when $D=k[h]$ or $M=N$ when
$D=k[h^{\pm 1}]$, then $A$ is isomorphic to the Ore extension
$D[x^{\pm 1},\sigma]$. As the results and methods needed to deal with
this case are different, we will do this separately at the end of this
paper. 

The algebra~$A$ is $\ZZ$-graded in a unique way so that the degrees
of~$y$,~$h$ and~$x$ are $1$,~$0$, and~$-1$, respectively; we refer to
the degree~$\abs{a}$ of an homogeneous element $a\in A$ in this
grading as its \emph{weight}, and extend this convention to related
contexts. For $r\in\ZZ$, we let $A^\wt r$ be the homogeneous component
of~$A$ of degree~$r$; we have $A^\wt0=k[h]$ and, for each $r\in\NN$,
$A^\wt r=y^rk[h]$ and $A^\wt{-r}=k[h]x^r$. 

\medskip

Let $V=\bigoplus_{i\in \ZZ}V_i$ be a graded vector space and let
$d:V\to V$ a not necessarily homogeneous linear endomorphism. We say
that $d$ is \emph{locally finite} if for each $v\in V$ the cyclic
subspace $\lin{v}_d$ of $V$ generated by $d$ and $v$ is
finite-dimensional, and that $d$ is \textit{locally nilpotent} if for
each $v\in V$ we have $d^i(v)=0$ for $i\gg0$. It is enough to check
these conditions on homogeneous elements of~$V$.

\begin{Lemma}
Suppose $d=d_1+\dots+d_l$ with $d_1$,~\dots,~$d_l:V\to V$
\emph{homogeneous} endomorphisms of~$V$ of degrees
$\alpha_1$,~\dots,~$\alpha_l$ such that $\alpha_1<\dots<\alpha_l$.
If $d$ is locally-finite then $d_1$ and $d_l$ are locally-finite.~\qed
\end{Lemma}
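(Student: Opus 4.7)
The plan is to exploit the fact that, among all homogeneous components of $d^k(v)$, the extreme ones pick out pure iterates of $d_l$ and $d_1$, and then trap those iterates inside a finite-dimensional graded enlargement of the cyclic subspace $\lin{v}_d$.

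First I would reduce to the case of a homogeneous $v\in V_n$, as the statement already notes that local finiteness can be checked on homogeneous elements. Expand
\[
d^k = \sum_{(i_1,\dots,i_k)\in\{1,\dots,l\}^k} d_{i_1}\circ\cdots\circ d_{i_k}.
\]
Applied to $v$, each word $d_{i_1}\cdots d_{i_k}(v)$ is homogeneous of weight $n+\alpha_{i_1}+\dots+\alpha_{i_k}$. Since $\alpha_l$ is the strictly largest of the $\alpha_i$ and $\alpha_1$ the strictly smallest, the unique word contributing to weight $n+k\alpha_l$ is $d_l^k$ and the unique word contributing to weight $n+k\alpha_1$ is $d_1^k$. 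Hence $d_l^k(v)$ is exactly the homogeneous component of $d^k(v)$ of maximal weight, and similarly $d_1^k(v)$ is the component of minimal weight.

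Next I would enlarge $W:=\lin{v}_d$ to a graded finite-dimensional subspace. Pick a basis $e_1,\dots,e_m$ of $W$; each $e_r$ has finite support in the grading of~$V$, so the union $S$ of these supports is a finite subset of~$\ZZ$, and every element of $W$ has graded support contained in~$S$. Thus the sum $W':=\sum_{j\in S}\pi_j(W)$ of the images of $W$ under the projections $\pi_j\colon V\to V_j$ is a graded subspace of $V$ containing $W$, and $\dim W'\le |S|\cdot\dim W<\infty$.

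Finally, for every $k\ge0$ we have $d^k(v)\in W\subseteq W'$, and the extraction of a single homogeneous component sends $W'$ into itself. Combining this with the identification of the top and bottom weight pieces from the first step, we conclude $d_l^k(v)\in W'$ and $d_1^k(v)\in W'$ for all $k$, so $\lin{v}_{d_l}$ and $\lin{v}_{d_1}$ are contained in $W'$ and hence finite-dimensional. I do not expect any serious obstacle: the only mild subtlety is in checking that the cyclic subspace admits a finite-dimensional graded hull, which is handled by the finite-support argument on a basis.
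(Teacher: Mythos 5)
Your proof is correct, and it is exactly the standard argument the paper has in mind when it omits the proof of this lemma: for homogeneous $v$ the extreme homogeneous components of $d^k(v)$ are $d_l^k(v)$ and $d_1^k(v)$, and these stay inside the (finite-dimensional) graded hull of the cyclic subspace $\lin{v}_d$. No issues.
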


An homogeneous endomorphism of~$V$ of non-zero degree is locally
finite iff it is locally nilpotent. It follows that if in the lemma we
have $\alpha_l\neq0$ then in fact $d_l$ is locally nilpotent, and
similarly for~$d_1$.

\medskip

Let now $A$ be a graded algebra. If $d:A\to A$ is a homogeneous
derivation of positive degree which is locally nilpotent, there is a
function $\deg_d:A\setminus0\to\NN$ such that for each $u\in
A\setminus0$ we have $\deg_d(u) = \max\{r\in \NN_0 : d^r(u)\neq 0\}$.
It is straightforward to check that $\deg_d$ is such that for all
$u$,~$v\in A$ we have
  \begin{gather*}
  \deg_d(u+v)\leq \max\{\deg_d(u),\deg_d(v)\}, \\
  \deg_d(uv) = \deg_d(u)+\deg_d(v).
 \end{gather*}
It follows from this that the subalgebra $\ker d$ is \emph{factorially
closed}: if $u$,~$v\in A\setminus0$ then
  \(
  d(uv)=0 \implies d(u)=d(v)=0
  \).
In particular, $d$ vanishes on the units of $A$.

In contexts where this makes sense, we will write $x\doteq y$ to mean
that $y$ is a non-zero scalar multiple of~$y$.

\section{Derivations}
\label{sect:ders}

Let $A=\A(D,q,a)$ be a quantum generalized Weyl algebra. If
$u_1$,~$u_2$,~$u_3\in A$, we write $u_1\der{y}+u_2\der{h}+u_3\der{x}$
the unique derivation $A\to A$ whose values at $y$,~$h$ and~$x$ are
$u_1$,~$u_2$ and~$u_3$, respectively, \emph{assuming there is
one}.

\begin{Lemma}
The algebra~$A$ has no non-zero locally nilpotent homogeneous
derivations.
\end{Lemma}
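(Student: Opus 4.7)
The plan is to show that any non-zero locally nilpotent homogeneous derivation~$d$ of~$A$ must satisfy $d(h)=0$, which immediately yields a contradiction: from $d(h)=0$ one gets $d(a(h))=0$, whence $d(yx)=0$, and factorial closedness of~$\ker d$ forces $d(y)=d(x)=0$, giving $d=0$. This reduction disposes of several cases at once. Indeed, when $D=k[h^{\pm 1}]$, $h$ is a unit of~$A$ and $d$ vanishes on units; and when $d$ has weight $r=0$, its restriction to $D\subset A$ is a locally nilpotent derivation $p(h)\der{h}$ of $k[h]$ with $p=d(h)$, while compatibility with $hy=qyh$ forces $p(qh)=qp(h)$ and in particular $p_0=0$ (since $q\neq1$); as the locally nilpotent derivations of $k[h]$ are the constant multiples of $\der{h}$, this gives $p=0$. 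Thus only $D=k[h]$ with $r\neq0$ remains, and since the anti-automorphism~$\sigma$ of~$A$ determined by $\sigma(y)=x$, $\sigma(x)=y$, $\sigma(h)=h$ conjugates weight-$r$ derivations into weight-$(-r)$ ones while preserving local nilpotence, we may further assume $r>0$.

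In this main case I would write $d(h)=y^r\phi(h)$, $d(y)=y^{r+1}\psi(h)$, $d(x)=y^{r-1}\chi(h)$ and suppose for contradiction that $\phi\neq0$. The first key step is a top-element argument: picking~$m\geq 2$ minimal with $d^m(h)=0$, the element $d^{m-1}(h)$ lies in $\ker d$, is non-zero, and has weight $(m-1)r>0$, so equals $y^{(m-1)r}P(h)$ for some $P\in k[h]\setminus\{0\}$. Applying factorial closedness of~$\ker d$ iteratively to this product (valid since $A$ is a domain) yields both $y\in\ker d$, hence $\psi=0$, and $P(h)\in\ker d$, hence---using $P'(h)\,d(h)=0$ together with $d(h)\neq0$---that $P$ is a constant.

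The contradiction then comes from a degree count. With $\psi=0$, compatibility with $hy=qyh$ forces $\phi(qh)=q\phi(h)$, pinning down $\phi_0=0$ and so $\deg\phi\geq1$; a direct induction then gives $d^k(h)=y^{kr}Q_k(h)$ with $Q_1=\phi$ and $Q_{k+1}=Q_k'(q^r h)\,\phi(h)$, and the degree recursion $\deg Q_{k+1}=\deg Q_k+\deg\phi-1$, together with a leading-coefficient check (nonvanishing in characteristic zero), shows that each $Q_k$ is a non-constant polynomial. This contradicts $P=Q_{m-1}$ being constant, completing the argument. The main obstacle I foresee is the top-element step, which must extract both $\psi=0$ and the constancy of~$P$ by careful use of factorial closedness and the domain property of~$A$; the subsequent degree recursion then proceeds routinely.
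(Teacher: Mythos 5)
Your reductions are fine and essentially parallel to the paper: the Laurent case via $d$ vanishing on units, the weight-zero case via locally nilpotent derivations of $k[h]$, the passage from $r<0$ to $r>0$ by conjugating with the anti-automorphism $y\leftrightarrow x$, $h\mapsto h$, and the extraction of $d(y)=0$ from the top of the $d$-orbit of $h$ by factorial closedness. The genuine gap is in the main case, at both places where you differentiate polynomials in $h$. Since $d(h)=y^r\phi(h)$ and $hy^r=q^ry^rh$, for $Q=\sum_nQ_nh^n\in k[h]$ one has
\[
d\bigl(Q(h)\bigr)=\sum_nQ_n\sum_{i=0}^{n-1}h^i\,y^r\phi(h)\,h^{n-1-i}
 = y^r\Bigl(\sum_nQ_n[n]_{q^r}h^{n-1}\Bigr)\phi(h),
 \qquad [n]_{q^r}=1+q^r+\dots+q^{r(n-1)},
\]
so the ordinary derivative must be replaced by the Jackson $q^r$-derivative. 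Consequently the inference ``$P\in\ker d$ and $d(h)\neq0$, hence $P'(h)d(h)=0$, hence $P$ constant'' is invalid, and the recursion $Q_{k+1}=Q_k'(q^rh)\phi(h)$ is not the correct one. If $q^r$ is a root of unity of order $e>1$, then $[n]_{q^r}=0$ whenever $e\mid n$: for instance $d(h^e)=0$ even though $d(h)\neq0$, so $P\in\ker d$ only yields $P\in k[h^e]$, not constancy; and in the correct recursion $Q_{k+1}=\bigl(\sum_nQ_{k,n}[n]_{q^r}h^{n-1}\bigr)\phi$ the leading coefficient of $Q_{k+1}$ is $[\deg Q_k]_{q^r}$ times that of $Q_k\phi$, which can vanish in characteristic zero, so the degree count $\deg Q_{k+1}=\deg Q_k+\deg\phi-1$ and the claim that every $Q_k$ is non-constant are unproven. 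Your contradiction thus evaporates exactly in the root-of-unity case, which is the whole point of the lemma (and of the paper).

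The paper avoids this by using the relation $\phi(qh)=q\phi(h)$ for divisibility rather than for a degree count: it gives $h\mid\phi$, one then checks that $d$ sends right multiples of $h$ to right multiples of $h$, so every $d^i(h)$ lies in $A^{\wt{ir}}h$; the top element $d^{\deg_d(h)}(h)$ is then a nonzero element of $\ker d$ of the form $u\,h$, and factorial closedness gives $d(h)=0$ outright, with no $q$-integers involved. Your scheme can in fact be repaired, but only by adding a missing idea: $\phi(qh)=q\phi(h)$ forces every exponent occurring in $\phi$ to be congruent to $1$ modulo the order of $q$, this congruence propagates to all the $Q_k$, and for such exponents $[n]_{q^r}=1$, so the twisted derivative never kills the coefficients. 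As written, without that observation (or the paper's divisibility argument), the proof does not go through when $q^r$ is a root of unity.
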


\begin{proof}
Let $d:A\to A$ be a locally nilpotent homogeneous derivation. Suppose
first that $D=k[h^{\pm 1}]$. As we observed above, $d$ vanishes on $h$
and $h^{-1}$ because they are units, so $d(yx)=d(a)=0$ and therefore
$d(y)=d(x)=0$: we see that $d=0$.

Let now $D$ be $k[h]$ and $r$ be the weight of~$d$. We will assume
that $r>0$; if we had $r<0$ the same reasoning would apply, and the
situation is even simpler if $r=0$. There are homogeneous elements of
positive weight in $\ker d$, so there exist $s\in\NN$ and $u\in k[h]$
such that $d(y^su)=0$. Since $\ker d$ is factorially closed, this
implies that in fact $d(y)=0$. On the other hand, there is a
polynomial $p\in k[h]$ such that $d(h)=y^rp$, and from the relation
$hy=qyh$ we see that $y^r p y = q y^{r+1}p$, so that $\sigma(p)=qp$:
it~follows from this that we can write $p=p_1h$ for some $p_1\in
k[h]$. If~$k\geq0$, then $d(A_kh)\subseteq A_{k+r}h$: indeed, if $f\in
k[h]$ we have 
  \[
  d(y^kfh) = y^kd(f)h+y^kfd(h) = y^kd(f)h + y^kfy^rp_1h \in A_{k+r}h
  \]
because $d(f)\in A_r$. This tells us that $d^i(h)\in A_{ir}h$ for all
$i\geq0$. If $i_0=\deg_d(h)$, then $0\neq d^{i_0}(h)\in
A_{i_0r}h\cap\ker d$ and, since $\ker d$ is factorially closed,
$d(h)=0$. An immediate consequence of this is that
$yd(x)=d(yx)=d(a)=0$, so also $d(x)=0$, and we see that $d=0$, as we
wanted.
\end{proof}

That $D$ is $k[h]$ or $k[h^{\pm1}]$ is important in this lemma, for
these two algebras have very few locally nilpotent derivations. Let us
exhibit an example with $D=k[h_1,h_2]$ where its conclusion does not
hold. We take $q\in k\setminus\{0,1\}$ a root of unity of order~$e>1$,
$\sigma:D\to D$ the automorphism such that $\sigma(h_i)=qh_i$ for
$i\in\{1,2\}$, an arbitrary $a\in D$ and consider the algebra
$A=\A(k[h_1,h_2],\sigma,a)$. There is a unique derivation $\tilde
d:D\to D$ such that $\tilde d(h_1)=h_2$ and $\tilde d(h_2)=0$, and it
is locally nilpotent, and using this it is easy to check that for each
$r>0$ there is a locally nilpotent derivation $d_r:A\to A$ with
$d_r(y)=0$, $d_r(h_1)=y^{re}h_2$, $d_r(h_2)=0$ and
$d_r(x)=y^{re-1}\tilde d(a)$. Since $d_r$ is clearly homogeneous, we
see that the conclusion of the lemma does not apply to~$A$.

\begin{Corollary}\label{corollary:lfd}
The locally finite derivations of~$A$ are homogeneous of weight zero.
\end{Corollary}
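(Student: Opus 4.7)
The plan is to decompose an arbitrary locally finite derivation into its homogeneous components and then rule out all non-zero components except the one of weight zero by combining the previous two lemmas.

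Concretely, let $d\colon A\to A$ be a locally finite derivation. Since $A$ is $\ZZ$-graded, we may write $d=\sum_{i\in\ZZ}d_i$ where $d_i\colon A\to A$ is the component of $d$ of weight~$i$ (sending an homogeneous element $a$ of weight $r$ to the weight-$(r+i)$ component of $d(a)$). A standard verification shows that each $d_i$ is itself a derivation, because the Leibniz rule respects the grading. For any homogeneous $v\in A$ of weight $r$, only finitely many $d_i(v)$ are non-zero, since $d(v)$ has finitely many homogeneous components, so the sum $d=\sum_i d_i$ is locally finite as a sum of operators.

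Now let $\alpha_1<\dots<\alpha_l$ be those integers with $d_{\alpha_j}\ne 0$ (assuming $d\ne 0$, for otherwise there is nothing to prove). The preceding lemma, applied to the restriction of $d$ to the cyclic subspaces, tells us that both $d_{\alpha_1}$ and $d_{\alpha_l}$ are locally finite. As remarked after that lemma, a homogeneous endomorphism of non-zero degree is locally finite if and only if it is locally nilpotent, so if $\alpha_l>0$ then $d_{\alpha_l}$ is a non-zero, homogeneous, locally nilpotent derivation of~$A$, contradicting the previous lemma; hence $\alpha_l\leq 0$. Symmetrically, if $\alpha_1<0$ then $d_{\alpha_1}$ is a non-zero homogeneous locally nilpotent derivation, again contradicting the previous lemma, so $\alpha_1\geq 0$.

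Combining $\alpha_1\geq 0$ and $\alpha_l\leq 0$ with $\alpha_1\leq\alpha_l$ forces $\alpha_1=\alpha_l=0$, so $d=d_0$ is homogeneous of weight zero, as claimed. The substantive input is the previous lemma (no non-zero locally nilpotent homogeneous derivations); everything else is a formal decomposition argument, so I do not expect any real obstacle beyond making sure that the homogeneous components of a derivation are again derivations and that local finiteness transfers from $d$ to the extremal components via the preceding lemma.
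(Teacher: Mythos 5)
Your argument is correct and is essentially the paper's own proof: decompose $d$ into homogeneous components, apply the lemma on sums of homogeneous endomorphisms of increasing degrees to conclude that the extremal components are locally finite, hence locally nilpotent if of non-zero weight, hence zero by the lemma on locally nilpotent homogeneous derivations. The one point to tighten is the finiteness of the set of non-zero components $d_i$, which your pointwise observation does not by itself give: it follows from the finite generation of $A$, since all but finitely many $d_i$ vanish on the generators $y$, $h$, $x$ (and $h^{-1}$) and a derivation vanishing on generators is zero --- this is exactly the role of the paper's remark that $A$ is finitely generated.
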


\begin{proof}
Let $d:A\to A$ be a locally finite derivation. Since $A$ is finitely
generated, there are non-zero homogeneous derivations
$d_1$,~\dots,~$d_l:A\to A$ of strictly increasing weights such that
$d=d_1+\cdots+d_l$. The weight of~$d_l$ cannot be positive, for then
$d_l$~would be locally nilpotent ---because~$d$ is locally finite---
and the lemma would imply that $d_l=0$; similarly, the weight of~$d_1$
cannot be negative. It follows that $d$ itself is homogeneous of
weight zero.
\end{proof}

\begin{Proposition}\label{prop:lfd}
Let $d:A\to A$ be a locally finite derivation, and consider the
derivation $\xi = y\der{y}-x\der{x}$.
\begin{thmlist}

\item If $a$ is not a monomial then $d$ is a scalar multiple of $\xi$.

\item If $a$ is a monomial then $d$ is a linear combination of $\xi$
and $\tau = h\der{h}+Nx\der{x}$.

\end{thmlist}
All locally finite derivations are diagonalizable with the standard
monomials as eigenvectors and, in particular, they commute.
\end{Proposition}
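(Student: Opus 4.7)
By Corollary~\ref{corollary:lfd}, $d$ is homogeneous of weight zero, so it preserves $A^\wt 0=D$, $A^\wt 1=yD$ and $A^\wt{-1}=Dx$, and there exist $p,f,g\in D$ with
\[
d(h)=p(h),\qquad d(y)=y\,f(h),\qquad d(x)=g(h)\,x.
\]
Applying $d$ to the relation $hy=qyh$ and simplifying with $\phi(h)y=y\phi(qh)$ yields $p(qh)=qp(h)$; the relation $xh=qhx$ produces the same identity. Since $d$ preserves $D$, its restriction $d|_D=p(h)\der h$ is itself locally finite on~$D$. The condition $p(qh)=qp(h)$ forces every nonzero monomial of~$p$ to have the form $c\,h^{1+je}$, where $e$ is the order of~$q$; but a term with $j\neq 0$ would make the cyclic subspace of a generic $h^k$ under $p\der h$ infinite-dimensional, so I conclude $p(h)=p_1 h$ for some $p_1\in k$.

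Now apply $d$ to $yx=a(h)$. Using $d(y)x=f(q^{-1}h)a(h)$, $y\,d(x)=g(q^{-1}h)a(h)$ and $d(a(h))=p_1 h\,a'(h)$, the relation becomes
\[
s(h)\,a(h)=p_1 h\,a'(h),\qquad s(h):=f(q^{-1}h)+g(q^{-1}h).
\]
I claim $s$ is a scalar. Setting $I=\operatorname{supp}(a)$ and $J=\operatorname{supp}(s)$, the right-hand side is supported in~$I$ (whose maximum is~$N$), while the coefficient of $h^{N+\max J}$ on the left is $a_N s_{\max J}$; if $\max J>0$, this would be nonzero without match on the right, a contradiction. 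So $\max J\leq 0$, and symmetrically $\min J\geq 0$. Hence $s=s_0\in k$ and the equation reduces to $s_0 a_i=p_1 i a_i$ for every $i\in I$. If $a$ is not a monomial, two distinct indices $i_1\neq i_2$ in~$I$ give $p_1(i_1-i_2)=0$, so $p_1=0$ and then $s_0=0$; if $a=a_Nh^N$ is a monomial, we only get $s_0=p_1 N$.

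It remains to show that $f$ itself is scalar. An induction yields $d^n(y)=y\,F_n(h)$, where $F_0=1$ and $F_{n+1}=fF_n+p_1 h F_n'$; if $\deg f>0$, comparing leading terms in this recursion shows $\deg F_n=n\deg f$, so the $F_n$ are linearly independent in~$D$, contradicting the local finiteness of $\lin{y}_d$. A symmetric argument on minimum degrees covers the Laurent case, so $f=\alpha\in k$. Then $g=-\alpha$ and $d=\alpha\xi$ in case~\thmitem{1}, while $g=p_1 N-\alpha$ and $d=\alpha\xi+p_1\tau$ in case~\thmitem{2}. A direct computation on standard monomials gives
\[
\xi(y^ih^jx^k)=(i-k)\,y^ih^jx^k,\qquad \tau(y^ih^jx^k)=(j+Nk)\,y^ih^jx^k
\]
(using $ik=0$), so both $\xi$ and $\tau$ are diagonal in the standard monomial basis; in particular $[\xi,\tau]=0$, and hence any two locally finite derivations commute.

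The subtlest step is the support analysis of $sa=p_1 h a'$: the conclusion that $s$ is scalar rests on the finiteness of $\operatorname{supp}(a)$ with nonzero extreme coefficients, and it is precisely the dichotomy $|I|\geq 2$ versus $|I|=1$ there that separates cases~\thmitem{1} and~\thmitem{2}.
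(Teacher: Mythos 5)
Your proof is correct and follows essentially the same route as the paper's: reduce to weight zero via Corollary~\ref{corollary:lfd}, show $d(h)$ is a scalar multiple of~$h$, use the recursion $F_{n+1}=fF_n+p_1hF_n'$ (the paper's $g_i$) to force $d(y)=\alpha y$, and compare supports in the equation obtained from $yx=a$ to separate the monomial and non-monomial cases, ending with the same inspection of $\xi$ and $\tau$ on standard monomials. The only cosmetic difference is that you obtain $d(h)=p_1h$ from the identity $p(qh)=qp(h)$ combined with local finiteness, whereas the paper first invokes the form $(\alpha h+\beta)\der{h}$ of a locally finite derivation of~$D$ and then kills $\beta$ using the relation $hy=qyh$.
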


We will refer to $\xi:A\to A$ in what follows as the \emph{Eulerian
derivation} of~$A$. It is easy to check that its eigenvalues are
exactly the integers, and that for each $r\in\ZZ$ the eigenspace
of~$\xi$ corresponding to~$r$ is precisely~$A^{\wt r}$, the
homogeneous component of~$A$ of weight~$r$.

\begin{proof}
According to Corollary~\ref{corollary:lfd} the derivation~$d$ is of
weight zero, so there are polynomials $p_1$,~$p_2$,~$p_3\in D$ such
that $d=yp_1\der{y}+p_2\der{h}+p_3x\der{x}$. In particular
$d$~restricts to a locally finite derivation $D\to D$, and therefore
this restriction has to be of the form $(\alpha h + \beta)\der h$,
with $\alpha$, $\beta\in k$. Looking at the coefficients of~$y$ in
both sides of the equality $d(hy)=qd(yh)$, we see that in fact
$\beta=0$.

There is a sequence $(g_i)_{i\geq0}$ in~$D$ such that $g_0=1$,
$d^i(y)=yg_i$ and $g_{i+1}=p_1g_i+\alpha g_i'h $ for all $i\geq0$. If
$D=k[h]$ we have $\deg g_i=i\deg p_1$ and the local finiteness of~$d$
implies that $p_1\in k$; if $D=k[h^{\pm 1}]$ we reach the same
conclusion by considering the degree of the first or last monomials of
the~$g_i$.

Applying $d$ to both sides of the equality $yx=a$, we see that
$a\sigma^{-1}(p_1+p_3)=\alpha a'h$, which is possible only if $p_3\in
k$. If we now solve this equation for the three scalars $p_1$,
$\alpha$ and $p_3$ we obtain the claims~\thmitem{1} and~\thmitem{2} of
the statement. The last claim, finally, can be proved directly by
inspection.
\end{proof}

Since the dimension of the vector space of locally finite derivations
of an algebra is invariant under isomorphisms, the above
Proposition~\ref{prop:lfd} has the following consequence:

\begin{Corollary}\label{coro:mon}
If $A_1=\A(D,q_1,a_1)$ and $A_2=\A(D,q_2,a_2)$ are two isomorphic
quantum generalized Weyl algebras, then either both $a_1$ and~$a_2$
are monomials or neither of them are. Moreover if one of them is a
unit the other one also.
\end{Corollary}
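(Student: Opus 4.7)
The plan is to combine two isomorphism invariants: the dimension of the space of locally finite derivations, which separates monomials from non-monomials, and the group of units $A^\times$, which separates units from non-units.

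First I would prove the key claim that \emph{if $a$ is not a unit in $D$, then $A^\times = D^\times$}. The algebra $A$ is a $\ZZ$-graded domain under the Eulerian grading, so by comparing the leading and trailing weights of $u$ and $u^{-1}$ for $u \in A^\times$ and using that $A$ is a domain, one forces $r_+ + s_+ = 0 = r_- + s_-$, hence $r_- = r_+$; so $u$ is homogeneous. If $u = y^r f \in A^{\wt r}$ is a unit with $r > 0$ and $u^{-1} = g x^r$, then using the commutation $yp(h) = \sigma^{-1}(p)(h) \, y$ and the identity $y^r x^r = \prod_{j=0}^{r-1} \sigma^{-j}(a)$ (proved by an easy induction from $yx = a$) one obtains
  \[
  1 = u u^{-1} = \sigma^{-r}(fg) \cdot \prod_{j=0}^{r-1} \sigma^{-j}(a) \in D.
  \]
Thus $\prod_{j=0}^{r-1} \sigma^{-j}(a)$ would be a unit of $D$; but each factor $\sigma^{-j}(a)$ has the same monomial support as $a$, and a product of non-units in $D$ is itself a non-unit (degrees add in $k[h]$, and in $k[h^{\pm 1}]$ a product of non-monomial Laurent polynomials is non-monomial since $(k[h^{\pm 1}])^\times$ consists exactly of monomials). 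This contradicts the assumption on $a$; the case $r < 0$ is symmetric, using $x^r y^r = \prod_{j=1}^{r} \sigma^j(a)$. Hence $A^\times = D^\times$.

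It then follows that unit-ness is an isomorphism invariant: when $a$ is a unit, $x \in A^\times$ has non-zero Eulerian weight (since $a^{-1} y$ is its inverse), so $A^\times$ has strictly larger free rank modulo torsion than $D^\times$; hence $A_1 \cong A_2$ forces $a_1, a_2$ to be simultaneously units or simultaneously non-units. To conclude the monomial claim, consider the two cases. If both $a_1, a_2$ are units they are automatically monomials, because $D^\times$ consists of monomials in both choices of $D$. If both are non-units, Proposition~\ref{prop:lfd} applies and shows that the dimension of the space of locally finite derivations of $A$ equals $1$ when $a$ is non-monomial and $2$ when $a$ is a monomial; since this dimension is preserved under isomorphism, so is the monomial condition.

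I expect the main obstacle to be the careful verification of the identity $y^r x^r = \prod_{j=0}^{r-1} \sigma^{-j}(a)$ and the observation that products of non-monomials in $k[h^{\pm 1}]$ remain non-monomials; both are elementary but require some care with the twisted multiplication of $A$.
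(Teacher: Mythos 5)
Your overall strategy is the same as the paper's: the monomial statement is read off from Proposition~\ref{prop:lfd} by comparing the dimension ($1$ versus $2$) of the space of locally finite derivations, and the unit statement is detected through the group of units. Your graded-domain argument showing that $A^\times=D^\times$ whenever $a$ is not a unit is correct (units in a $\ZZ$-graded domain are homogeneous, and a homogeneous unit of non-zero weight would force $\prod_j\sigma^{-j}(a)$, hence $a$ itself, to be a unit of $D$); this is exactly the content the paper leaves as ``easy to see''.

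There is, however, a genuine flaw in the last step of your unit argument. You deduce $A_1\not\cong A_2$ from the claim that, when $a$ is a unit, ``$A^\times$ has strictly larger free rank modulo torsion than $D^\times$''. In characteristic zero this comparison is vacuous: $k^\times$ contains the rational primes, which are multiplicatively independent modulo torsion, so $k^\times$ already has infinite rank and $k^\times\times\ZZ^m$ has the same rank as $k^\times$. Worse, for $k=\QQ$ one has $\QQ^\times\cong\ZZ/2\ZZ\oplus F$ with $F$ free abelian of countably infinite rank, so $\QQ^\times\times\ZZ\cong\QQ^\times$ as abstract groups; thus no invariant of the abstract group $A^\times$ alone can separate the two cases when $D=k[h]$, where the groups to be distinguished are $k^\times$ and $k^\times\times\ZZ$. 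The repair is precisely what the paper does: a $k$-algebra isomorphism fixes $k^\times$ pointwise, hence induces an isomorphism $A_1^\times/k^\times\to A_2^\times/k^\times$, and by your computations this quotient is trivial or infinite cyclic when $D=k[h]$, and infinite cyclic or free abelian of rank two when $D=k[h^{\pm1}]$, according as $a$ is a non-unit or a unit; these finite-rank free abelian groups do distinguish the cases, and the rest of your argument then goes through unchanged.
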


\begin{proof}
The first claim is an immediate consequence of the proposition. On the
other hand, it is easy to see that $a_1$ is a unit if and only if
$A_1^\times/k^\times$ is a non-trivial group; in that case, it is
isomorphic to $\ZZ$ when $D=k[h]$ and to $\ZZ^2$ when $D=k[h^{\pm
1}]$. As this quotient is invariant under isomorphisms of
$k$-algebras, the second claim follows.
\end{proof}

We are now in position to establish  the key fact that will allow us
to describe the isomorphisms and automorphisms of our algebras in the
next section:

\begin{Proposition}\label{prop:euler}
Let $A_1=\A(D,q_1,a_1)$ and $A_2=\A(D,q_2,a_2)$ two quantum
generalized Weyl algebras with $a_1$ and $a_2$ not units, and let
$\xi_1$ and $\xi_2$ be their respective Eulerian derivations. If
$\eta:A_1\to A_2$ is an isomorphism, then
$\eta\circ\xi_1\circ\eta^{-1}$ is a scalar multiple of~$\xi_2$.
\end{Proposition}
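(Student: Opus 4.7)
The plan is as follows. Conjugation by $\eta$ carries derivations to derivations and preserves local finiteness, so $d := \eta \circ \xi_1 \circ \eta^{-1}$ is a locally finite derivation of $A_2$. By Corollary~\ref{coro:mon}, either both $a_1$ and $a_2$ are monomials or neither is, and since in the case $D = k[h^{\pm 1}]$ every non-zero monomial is a unit, the non-unit hypothesis places us in the non-monomial case automatically. When $a_2$ is not a monomial, Proposition~\ref{prop:lfd}\thmitem{1} immediately yields that $d$ is a scalar multiple of~$\xi_2$.

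The substantive case is when $D = k[h]$ and $a_i = c_i h^{N_i}$ with $N_i \geq 1$, so that Proposition~\ref{prop:lfd}\thmitem{2} gives $d = \alpha\xi_2 + \beta\tau_2$ for some $\alpha, \beta \in k$; the goal is to show $\beta = 0$. My approach is to also conjugate~$\tau_1$, writing $\eta \circ \tau_1 \circ \eta^{-1} = \gamma\xi_2 + \delta\tau_2$, which assembles the data of $\eta$-conjugation into the invertible matrix $T = \psm{\alpha & \beta \\ \gamma & \delta}$ acting on the two-dimensional space of locally finite derivations. The commuting pair $(\xi_i, \tau_i)$ (commutativity is part of Proposition~\ref{prop:lfd}) defines a $\mathbb{Z}^2$-bigrading of $A_i$ in which the standard monomials are bihomogeneous, and whose set of bi-eigenvalues is
\[
S_i = \bigl\{(m, n) \in \mathbb{Z}^2 : n \geq 0 \text{ and } N_i m + n \geq 0\bigr\},
\]
namely the lattice points of the convex cone $C_i \subseteq \mathbb{R}^2$ generated by the primitive vectors $(1, 0)$ and $(-1, N_i)$.

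Since $\eta$-conjugation preserves bi-spectra, $T(S_2) = S_1$. Because $(1, 0), (0, 1) \in S_2$, the columns of~$T$ lie in $S_1 \subseteq \mathbb{Z}^2$, so $T$ has integer entries, and the same argument applied to $T^{-1}$ gives $T \in \GL_2(\mathbb{Z})$ and $T(C_2) = C_1$ as convex cones. Hence $T$ permutes the extreme rays of these cones, and since it sends primitive lattice generators to primitive lattice generators, either $T(1, 0) = (1, 0)$ and $T(-1, N_2) = (-1, N_1)$, or $T$ swaps these two target values. A direct comparison of entries in either alternative---using $T(1, 0) = (\alpha, \gamma)$ and $T(-1, N_2) = (-\alpha + \beta N_2,\, -\gamma + \delta N_2)$---forces $\beta = 0$ (along with $\alpha = \pm 1$ and $N_1 = N_2$), completing the proof. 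The main obstacle is organizing this combinatorial picture; once the cones and the map~$T$ are in place, the conclusion reduces to a brief lattice computation.
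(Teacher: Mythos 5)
Your proof is correct and follows essentially the same route as the paper's: after disposing of the non-monomial case via part \thmitem{1} of Proposition~\ref{prop:lfd}, you conjugate the pair $(\xi_1,\tau_1)$, obtain an integral matrix in $\GL_2(\ZZ)$ carrying the support of one $\ZZ^2$-bigrading onto the other, and force $\beta=0$ by looking at the boundary of the cone---exactly the paper's argument, which phrases the boundary step via indecomposable non-interior points of the semigroup $\Lambda_i$ rather than primitive generators of extreme rays. Your explicit appeal to Corollary~\ref{coro:mon} to ensure $a_1$ is also a monomial (so that $\tau_1$ exists and spans, with $\xi_1$, the locally finite derivations of $A_1$) makes precise a point the paper leaves implicit.
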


\begin{proof}
Let us write $\xi_2'=\eta\circ\xi_1\circ\eta^{-1}$, which is a locally
finite derivation of~$A_2$. If~$a_2$ is not a monomial, then the first
part of Proposition~\ref{prop:lfd} immediately implies that $\xi_2'$
must be a scalar multiple of~$\xi_2$. We need only consider, then, the
case where $a_2=h^{N_2}$ is a monomial and therefore, by our
assumption that $a_2$ is not a unit, that $N_2>0$ and $D=k[h]$. We
have, then, a derivation $\tau_2'=\eta\circ\tau_1\circ\eta^{-1}$ with
the notation of Proposition~\ref{prop:lfd}. The second part of that
proposition implies that there is a matrix
$M=\psm{m_{1,1}&m_{1,2}\\m_{2,1}&m_{2,2}}\in\GL_2(k)$ such that
  \begin{equation} \label{eq:M}
  \begin{pmatrix}\xi_2'\\\tau_2'\end{pmatrix}
  =
  M \begin{pmatrix}\xi_2\\\tau_2\end{pmatrix}.
  \end{equation}

If $i\in\{1,2\}$, the derivations $\xi_i$ and $\tau_i$ are
simultaneously diagonalizable with integer eigenvalues, so there is a
direct sum decomposition $A_i=\bigoplus_{\lambda\in\ZZ^2}A_i^\lambda$
with $A_i^\lambda=\{u\in
A_i:\xi_i(u)=\lambda_1u,\tau_i(u)=\lambda_2u\}$ for all
$\lambda=(\lambda_1,\lambda_2)\in\ZZ^2$, which is a $\ZZ^2$-grading.
The set $\Lambda_i=\{\lambda\in\ZZ^2:A_i^\lambda\neq0\}$ is a
submonoid of~$\ZZ^2$, and it is generated as such by $(1,0)$, $(0,1)$
and $(-1,N_i)$ because $y$, $h$ and $x$ are, respectively, of those
degrees. Morover, the vectors $(1,0)$ and $(-1,N_i)$ are the unique
indecomposable elements of~$\Lambda_i$ which are not interior to the
convex hull of~$\Lambda_i$; see Figure~\vref{figura}.

  \begin{figure}
  \begin{tikzpicture}[scale=0.8]
  \fill[black!15] (0,0) -- (3.25,0) -- (3.25,4.25) 
                        -- ($(-4.25/2,4.25)$) -- cycle ;
  \draw[thin, ->] (-2.5,0) -- (3.5,0);
  \draw[thin, ->] (0,-.5) -- (0,4.5);
  \draw[very thick] (0,0) -- ($(-4.5/2,4.5)$);
  \draw[very thick] (0,0) -- (3.5,0);
  \foreach \v in { (-1,2) , (1,0), (0,1) }
    \draw[line width=3pt, red, -latex] (0,0) -- \v;
  \foreach \x in { -2, ..., 3 }
    \foreach \y in { 0, ..., 4 }  
      \pgfmathparse{\y + 2 * \x >= 0 ? "fill" : ""}
      \draw[\pgfmathresult] (\x,\y) circle(2pt) ;
  \node[anchor=center] at (1.5,3.5) {$\Lambda_i$};
  \end{tikzpicture}
  \caption{The semigroup $\Lambda_i$.}\label{figura}
  \end{figure}
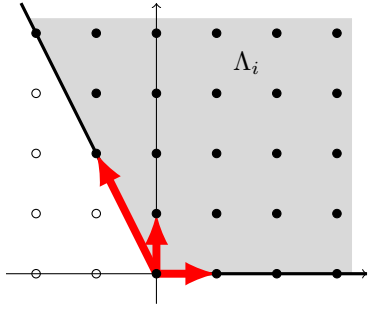

If $\lambda\in\ZZ^2$ and $u\in A_2^\lambda$, we see from~\eqref{eq:M}
that $\eta^{-1}(u)\in A_1^{M\lambda}$: it follows that $\eta^{-1}$
restricts to an isomorphism $A_2^{\lambda}\to A_1^{M\lambda}$ and,
therefore,  the linear map $\lambda\in\ZZ^2\mapsto M\lambda\in\ZZ^2$
induced by~$M$ restricts to an isomorphism
$\phi:\Lambda_2\to\Lambda_1$. As $(1,0)$ and $(0,1)$ are in
$\Lambda_2$ and their images under this map are in~$\ZZ^2$, we see
that $M$ has integral coefficients; considering the inverse
map~$\eta^{-1}$ we see that the same applies to $M^{-1}$, so that
$M\in\GL_2(\ZZ)$. Since the restriction~$\phi$ must preserve
indecomposable non-interior points, it maps the vectors $(1,0)$ and
$(-1,N_2)$ to $(1,0)$ and $(-1,N_1)$ in some order. Exploring the two
possibilities shows that $N_1=N_2$ and that $M$ is either the identity
matrix or $\begin{psmallmatrix}-1&0\\N_1&1\end{psmallmatrix}$. In any
case, we see that $\xi_2'\doteq\xi_2$.
\end{proof}

We remark that the result of this proposition is false when $a_1$ (and
then $a_2$) is a unit. For example, there is an automorphism
$\eta:A(D,q,1)\to A(D,q,1)$ such that $\eta(x)=x$, $\eta(h)=hx$,
$\eta(y)=y$, and it does not preserve the Eulerian derivation. This
fact is what forces us to consider this case separately.

\section{Automorphisms and isomorphisms}
\label{sect:proofs}

We have shown that isomorphisms of quantum generalized Weyl algebras
preserve, up to scalars, their Eulerian derivations. This fact evinces
a non-trivial rigidity of these algebras which strongly restricts the
form of isomorphisms between them:

\begin{Proposition}\label{prop:iso-h}
If $\eta:A_1(D,q_1,a_1)\to A_2(D,q_2,a_2)$ is an isomorphism of
quantum generalized Weyl algebras with $a_1$ and $a_2$ not units, then
there exist $\gamma \in k$, $\epsilon \in \{\pm 1\}$
and~$\mu$,~$\nu\in D^\times$ such that $\eta(h)=\gamma h^\epsilon$ and 
\begin{itemize}

\item[\circitem{1}] either $\eta(y)=y\mu$ and $\eta(x)=\nu x$ 

\item[\circitem{2}] or $\eta(y)=\mu x$ and $\eta(x)=y\nu$.
\end{itemize}
If $D=k[h]$ then necessarily $\epsilon=1$ and $\mu$,~$\nu\in k^\times$.
\end{Proposition}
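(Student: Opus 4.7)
The plan starts with Proposition~\ref{prop:euler}, which yields a scalar $c\in k^\times$ with $\eta\circ\xi_1\circ\eta^{-1}=c\xi_2$. Since the spectrum of the Eulerian derivation $\xi_i$ is exactly~$\ZZ$ and conjugation preserves spectra, we must have $c\,\ZZ=\ZZ$; hence both $c$ and $c^{-1}$ lie in $\ZZ$, forcing $c\in\{\pm1\}$. These two possibilities will correspond to the alternatives \circitem{1} and \circitem{2} of the statement.

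Assume first that $c=1$. Then $\eta$ preserves the weight grading, so it restricts to a $k$-algebra isomorphism $D\to D$, sends $y$ into $A_2^\wt{1}=yD$ and sends $x$ into $A_2^\wt{-1}=Dx$; write $\eta(h)=f$, $\eta(y)=y\mu$, $\eta(x)=\nu x$ with $f,\mu,\nu\in D$ (all non-zero, because $\eta$ is injective). Since $\eta|_D$ is an automorphism of $D$, we have $f=\gamma h+\delta$ with $\gamma\in k^\times$, $\delta\in k$ when $D=k[h]$, and $f=\gamma h^\epsilon$ with $\gamma\in k^\times$, $\epsilon\in\{\pm1\}$ when $D=k[h^{\pm1}]$. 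Now apply $\eta$ to the defining relation $hy=q_1yh$, push $f$ past $y$ via the identity $p(h)y=y\,p(q_2h)$ valid in $A_2$, and cancel $y$ and $\mu$ using that $A_2$ is a domain and $D$ is commutative: we end up with $f(q_2h)=q_1 f(h)$ in $D$. Matching coefficients forces $\delta=0$ when $D=k[h]$ (since $q_1\neq1$) and, in both cases, $q_1=q_2^\epsilon$; in particular $f=\gamma h^\epsilon$ with $\epsilon=1$ in the polynomial case.

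To see that $\mu,\nu\in D^\times$, apply the same grading analysis to $\eta^{-1}$: write $\eta^{-1}(y)=y\mu'$ with $\mu'\in D$, so that $y=\eta(y\mu')=y\mu\cdot\eta(\mu')$ in $A_2$; since $\eta(\mu')\in D$, this gives $\mu\,\eta(\mu')=1$ in $D$ and hence $\mu\in D^\times$, and similarly for $\nu$. Note that $D^\times=k^\times$ when $D=k[h]$. The case $c=-1$ is strictly parallel: now $\eta$ maps $A_1^\wt{r}$ to $A_2^\wt{-r}$, so $\eta(y)\in Dx$ and $\eta(x)\in yD$, we write $\eta(y)=\mu x$, $\eta(x)=y\nu$ and repeat the arguments above to arrive at form~\circitem{2}. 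The main obstacle lies in the fourth step, where the commutation relation must be exploited to kill the translation term of $\eta(h)$ in the polynomial case and to determine the exponent $\epsilon$ via $q_1=q_2^\epsilon$; once the grading is at our disposal via Proposition~\ref{prop:euler}, everything else is essentially bookkeeping.
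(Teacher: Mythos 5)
Your proposal is correct and follows essentially the same route as the paper: invoke Proposition~\ref{prop:euler} to get $\eta\circ\xi_1\circ\eta^{-1}=\pm\xi_2$, use the resulting compatibility with the weight grading to see that $\eta$ restricts to an automorphism of $D$ and sends $y$, $x$ to generators of $A_2^{\wt{\pm1}}$ up to units (your $\eta^{-1}$ trick is the paper's ``$y=\eta(yf)$'' argument in slightly different words), and then apply $\eta$ to $hy=q_1yh$ to kill the constant term of $\eta(h)$ in the polynomial case.
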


\begin{proof}
According to Proposition~\ref{prop:euler}, there exists a non-zero
scalar $\lambda\in k$ such that   
  \begin{equation}\label{eq:etaxi}
  \eta\circ\xi_1=\lambda\,\xi_2\circ\eta
  \end{equation}
and, therefore, for each $r\in\ZZ$ the subspace $\eta(A_1^{\wt r})$ is
the eigenspace of~$\xi_2$ corresponding to the eigenvalue~$r/\lambda$;
in particular, $D=\ker\xi_2=\eta(\ker\xi_1)=\eta(D)$ and
$\eta$~restricts to an algebra isomorphism~$D\to D$. As $\xi_2$ has
integer eigenvalues, we must have $\lambda\in\{\pm1\}$. 

Let us suppose that~$\lambda=1$; the other possibility can be handled
similarly and will lead to the second possibility~\circitem{2} in the
statement. There exists an $f\in D$ such that
$y=\eta(yf)=\eta(y)\eta(f)$: since $\eta(f)\in D$, this implies that
$\eta(y)$~generates $A_2^\wt1$ as a right $D$-module. This module is
free of rank one and $y$ and~$\eta(y)$ are two generators: it~follows
that there is an unit~$\mu\in D$ such that $\eta(y)=y\mu$. The same
argument applied to~$x$ shows that there is also an unit $\nu\in D$
such that $\eta(x)=\nu x$.

Consider the case $D=k[h]$. As the restriction $\eta:D\to D$ is an
isomorphism, we have $\eta(h)=\gamma h+\delta$ for some~$\gamma\in
k\setminus0$ and $\delta\in k$. Since $hy=q_1yh$ in~$A_1$, we have
$(\gamma h+\delta)\mu y=q_1 \mu y(\gamma h+\delta)$ in~$A_2$. We
conclude that $\delta=0$ and the proposition follows. In the case
$D=k[h^{\pm 1}]$ we must have $\eta(h)=\gamma h^\epsilon$ for some
$\epsilon\in\{\pm 1\}$ since $h$ generates $D^\times / k^\times\cong
\ZZ$.
\end{proof}

From the locally nilpotent derivation $d$ of the algebra
$A=\A(k[h_1,h_2],\sigma,a)$ considered in the example given in
Section~\ref{sect:ders}, we obtain, by exponentiation, a
$1$\nobreakdash-parameter family of automorphisms $\eta_t:A\to A$ such
that   
  \begin{align*}
  &\eta_t(y) = y,  
        &&\eta_t(h_1) = h_1 + t y^{re}h_2, 
        \\
  &\eta_t(x) = y^{-1}a(h_1+ty^{re}h_2,h_2) 
        &&\eta_t(h_2)=h_2,
  \end{align*}
which is neither homogeneous not linear. This shows that the
conclusion of the proposition above does not hold when $D=k[h_1,h_2]$
and, in fact, as this construction can be carried out starting from
any locally nilpotent derivation of $D$ ---in this case the group of
automorphisms is much larger.

\medskip

At this point, we have everything we need to prove the theorems from
the introduction.

\begin{proof}[Proof of Theorem~\ref{thm:isos}]
The sufficiency of the condition can be checked by a straightforward
verification, which we omit, so we only prove the necessity. 

Let $\eta:A_1\to A_2$ be an isomorphism. From
Proposition~\ref{prop:iso-h} we know there is $\gamma \in k$,
$\epsilon \in \{\pm 1\}$ and~$\mu$,~$\nu\in D^\times$ such that
$\eta(h)=\gamma h^\epsilon$ and \circitem{1} either $\eta(y)=y\mu$ and
$\eta(x)=\nu x$ \circitem{2} or $\eta(y)=\mu x$ and $\eta(x)=y\nu$; if
$D=k[h]$ then moreover $\epsilon=1$ and $\mu$,~$\nu\in k^\times$. If
we are in the first case, we have
  \begin{gather*}
  \sigma^{-1}(\mu\nu)a_2(h)
    = y\mu\nu x
	= \eta(yx)
	= \eta(a_1(h))
	= a_1(\gamma h^\epsilon)
	\\
\shortintertext{and}
  \gamma q_2^\epsilon yh^\epsilon\mu
    = \gamma h^\epsilon y\mu
	= \eta(hy)
	= q_1\eta(yh)
	= \gamma q_1y\mu h^\epsilon.
  \end{gather*}
As $\sigma^{-1}(\mu\nu)\in D^\times$, the necessity of the conditions
is clear. 
\end{proof}

\begin{proof}[Proof of Theorem~\ref{thm:autos-1}]
The verification that the set $G$ is indeed a subgroup of~$\Aut(A)$ is
routine, so we only check~\thmitem{1} and~\thmitem{2}. Let $\eta:A\to
A$ be an automorphism. According to Proposition~\ref{prop:iso-h},
there are $\gamma$,~$\mu$,~$\nu\in D^\times=k\setminus0$ such that
$\eta(h)=\gamma h$ and either \circitem{1}~$\eta(y)=y\mu $ and
$\eta(x)=\nu x$, or \circitem{2}~$\eta(y)=\mu x$ and $\eta(x)=y\nu$.
If \circitem{1}~holds, applying~$\eta$ to both sides of the
equality~$yx=a(h)$ shows that 
  \begin{equation}\label{eq:munu}
  a_i\neq0 \implies \gamma^i=\mu\nu, 
  \end{equation}
so that $\gamma^{i-j}=1$ whenever $a_ia_j\neq0$ and, in consequence,
$\gamma\in C_g$. Additionally,  \eqref{eq:munu}~tells us that
$\nu=\mu^{-1}\gamma^N$ and then we see that $\eta=\eta_{\gamma,\mu}\in
G$.

If instead \circitem{2}~holds, applying~$\eta$ to the
equality~$hy=qyh$ shows that $q^2=1$ so that in fact $q=-1$. This
means that when $q=-1$ the alternative~\circitem{2} does not occur,
and~$\Aut(A)=G$. On the other hand, if~$q=-1$ there is indeed an
automorphism~$\Omega$ as described in the statement, and
$\eta\circ\Omega\in G$ because this composition falls in the
case~\circitem{1} with which we have already dealt. The subgroup~$G$
together with~$\Omega$ thus generate~$\Aut(A)$ in this situation and
all the claims in~\thmitem{2} now follow at once.
\end{proof}

We need two lemmas for the proof of Theorem~\ref{thm:autos-2}; the
notation is as in the statement of that theorem.

\begin{Lemma}
The Laurent polynomial $a$ is symmetric if and only if there exists an
automorphism $\Omega_\sym:A\to A$ such that $\Omega_\sym(h)\doteq
h^{-1}$.
\end{Lemma}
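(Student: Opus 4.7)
The plan is to prove each direction by explicit construction, relying on Proposition~\ref{prop:iso-h} for the converse.

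For the forward direction, I would start from a symmetry identity $\delta a(h) = h^l a(\gamma h^{-1})$ (with $\gamma,\delta\in k^\times$, $l\in\NN$) and define a candidate $\Omega_\sym:A\to A$ on the generators by
\[
\Omega_\sym(y) = x, \qquad
\Omega_\sym(h) = q^{-1}\gamma h^{-1}, \qquad
\Omega_\sym(x) = \delta h^{-l} y,
\]
then verify it respects the four defining relations of~$A$. The two $q$-commutations $hy = qyh$ and $xh = qhx$ become tautologies after using $h^{-1}y = q^{-1}yh^{-1}$ and $h^{-1}x = qxh^{-1}$. For $yx = a(h)$, a direct computation gives $\Omega_\sym(y)\Omega_\sym(x) = x\cdot\delta h^{-l}y = \delta q^{-l}h^{-l}a(qh)$, while $\Omega_\sym(a(h)) = a(q^{-1}\gamma h^{-1})$; these agree because substituting $h\mapsto qh$ in the symmetry identity yields $\delta a(qh) = q^l h^l a(q^{-1}\gamma h^{-1})$, and $xy = a(qh)$ is handled analogously. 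Invertibility of~$\Omega_\sym$ is clear from the fact that $\Omega_\sym^2$ fixes~$h$ and rescales~$y$ and~$x$ by units of~$D$, hence is manifestly an automorphism.

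For the converse, suppose $\Omega:A\to A$ is an automorphism with $\Omega(h) = \gamma_0 h^{-1}$ for some $\gamma_0\in k^\times$. By Proposition~\ref{prop:iso-h}, $\Omega$ falls into one of two forms. In form~\circitem{2}, $\Omega(y) = \mu x$ and $\Omega(x) = y\nu$ with units $\mu,\nu\in D^\times$, and the image of $yx = a(h)$ reads $\mu\nu\cdot a(qh) = a(\gamma_0 h^{-1})$ in~$D$; writing the unit $\mu\nu = eh^s$ and matching coefficients of~$h^j$ on both sides produces the symmetry $\delta a(h) = h^l a(\gamma h^{-1})$ with $l = -s$, $\gamma = q\gamma_0$, and $\delta = eq^{-s}$. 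In form~\circitem{1}, $\Omega(y) = y\mu$ and $\Omega(x) = \nu x$, and the relation $hy=qyh$ pushed through~$\Omega$ forces $q = -1$; the analogous coefficient analysis, which now incorporates the twist $y\cdot f(h) = f(q^{-1}h)\cdot y$, again gives the symmetry condition, this time with $\gamma = \gamma_0$.

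The main point of care is picking the correct scalar $\gamma_0 = q^{-1}\gamma$ in the construction of~$\Omega_\sym$: the naive guess $\gamma_0 = \gamma$ fails because of the twist $\sigma(h) = qh$ that arises when~$x$ is commuted past powers of~$h$. Beyond this, the verification amounts to routine bookkeeping of the $q$-powers introduced by the commutation relations $xh^{\pm 1} = q^{\pm 1}h^{\pm 1}x$ and $yh^{\pm 1} = q^{\mp 1}h^{\pm 1}y$.
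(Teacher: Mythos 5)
Your proof is correct and follows essentially the same route as the paper: the automorphism you construct is the paper's one (your $\delta h^{-l}y$ equals its $\delta q^{-l}yh^{-l}$ after commuting $y$ past $h^{-l}$), and the converse is the same application of Proposition~\ref{prop:iso-h} together with the relation $yx=a(h)$. If anything you are slightly more explicit than the paper, which leaves the unit bookkeeping and the $q=-1$ subcase to the reader.
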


\begin{proof}
If $a$ is symmetric then there exist $l\in\NN$ and $\gamma$, $\delta
\in k$ such that $\delta a(h)=h^l  a(\gamma h^{-1})$. The automorphism
$\Omega_\sym$ is defined by
  \begin{align*}
    &\Omega_\sym(y)= x,
    &&\Omega_\sym(h)=q^{-1}\gamma h^{-1},
    && \Omega_\sym(x)= \delta q^{-l}y h^{-l}.
  \end{align*}
Conversely, if exists such an automorphism $\Omega_\sym$ then

  \[
  \Omega_\sym(y)\Omega_\sym(x)
        = \Omega_\sym(yx)
        = \Omega_\sym(a)
        = a(\gamma h^{-1}),
  \]
and it is easy to see, applying Proposition~\ref{prop:iso-h}, that the
left hand side of this equation is equal to $\delta a(h) h^{-l}$ for
some $\delta\in k$ and $l\in\ZZ$.
\end{proof}

\begin{Lemma}
The parameter $q$ is equal to $-1$ if and only if there exists an
automorphism $\Omega_{-1}:A\to A$ such that $\Omega_{-1}(y)\doteq x$,
$\Omega_{-1}(x)\doteq y$ and $\Omega_{-1}(h)\doteq h$.
\end{Lemma}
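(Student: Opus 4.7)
The statement is a short biconditional, and the plan is to handle each direction by a direct verification from the defining relations of $A$.

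For the ``only if'' direction, when $q=-1$ I will define $\Omega_{-1}$ on generators by $\Omega_{-1}(y)=x$, $\Omega_{-1}(x)=y$ and $\Omega_{-1}(h)=-h$, and check that all four defining relations are preserved. For instance, the relation $hy=-yh$ becomes $(-h)x=-x(-h)$, which simplifies to $xh=-hx$, i.e.\ the relation $xh=qhx$ for $q=-1$; the relation $yx=a(h)$ becomes $xy=a(-h)=a(qh)$; and the remaining two relations transform analogously. Since $\Omega_{-1}$ is involutive on generators it is an involution of $A$, hence an automorphism satisfying $\Omega_{-1}(y)\doteq x$, $\Omega_{-1}(x)\doteq y$ and $\Omega_{-1}(h)\doteq h$.

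For the converse, assume that such an $\Omega_{-1}$ exists. By hypothesis there are scalars $\alpha,\beta,\gamma\in k^\times$ such that $\Omega_{-1}(y)=\alpha x$, $\Omega_{-1}(x)=\beta y$ and $\Omega_{-1}(h)=\gamma h$. Applying $\Omega_{-1}$ to the relation $hy=qyh$ gives $\gamma\alpha\,hx=q\alpha\gamma\,xh$, so after cancelling the non-zero scalar factor one obtains $hx=q\,xh$ in $A$. On the other hand, the defining relation $xh=qhx$ can be rewritten as $hx=q^{-1}xh$, so $q=q^{-1}$ and therefore $q^2=1$. Since $q\neq 1$ by the standing assumption in Section~\ref{prel}, this forces $q=-1$.

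I do not anticipate a substantive obstacle: the key observation is that the prescribed form of $\Omega_{-1}$ effectively exchanges the two quasi-commutation relations $hy=qyh$ and $xh=qhx$, and these relations become compatible after the exchange only when $q$ is an involution.
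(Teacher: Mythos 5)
Your proof is correct and follows essentially the same route as the paper: the explicit construction $\Omega_{-1}(y)=x$, $\Omega_{-1}(x)=y$, $\Omega_{-1}(h)=-h$ (the paper writes $qh$, which is the same thing) for the forward direction, and applying the automorphism to the relation $hy=qyh$ and comparing with $xh=qhx$ to force $q=q^{-1}$, hence $q=-1$ since $q\neq1$, for the converse. The only difference is that you spell out the verification of the relations and the involutivity, which the paper leaves implicit.
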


\begin{proof}
If $q=-1$ then $\Omega_{-1}$ is defined by
 \begin{align*}
 &\Omega_{-1}(y) =x, 
        && \Omega_{-1}(h)=qh, 
        && \Omega_{-1}(x)=y.
 \end{align*}
If there exists an automorphism as in the statement then
 \[
  hx \doteq \Omega_{-1}(h)\Omega_{-1}(y)
        = \Omega_{-1}(hy)
        = q\Omega_{-1}(yh)
        = q\Omega_{-1}(y)\Omega_{-1}(h)
        \doteq qxh,
 \]
so that $q=q^{-1}$.
\end{proof}

\begin{proof}[Proof of Theorem~\ref{thm:autos-2}]
Let $\eta\in \Aut(A)$. From Proposition~\ref{prop:iso-h}, we know that
$\eta(h)\doteq h^{\pm 1}$. If $\eta(h)\doteq h^{-1}$ then the first
lemma above shows that $a$ is symmetric and, moreover, that
$\eta\circ\Omega_\sym\in K$. If $a$ is not symmetric then we must have
$\eta(h)\doteq h$ and therefore $\eta\in K$. This proves
part~\thmitem{1} of the theorem.

Assume now that $\eta\in K$. In this case, if $\eta(y)\doteq x$ we must
have $q=-1$ and in this case $\eta\circ \Omega_{-1}\in G$. Conversely,
if $q\neq -1$ then necessarily $\eta(y)\doteq y$, so that $K$ is as
in~\thmitem{2}.
\end{proof}

The theorems stated in the introduction leave untouched the case in
which the parameter~$a$ of the generalized Weyl algebras $\A(D,a,q)$
is a unit in~$D$. As promised there, we now state and prove the
corresponding results for this case.

\begin{Theorem}
\begin{thmlist}

\item Let $A=\A(k[h],a,q)$ with $a\in k^\times$. If $q\neq-1$, let $H$
be the subgroup $\{\psm{1&z\\0&1}:z\in\ZZ\}$ and let $H$ be
$\{\psm{\pm1&z\\0&1}:z\in\ZZ\}$ otherwise. There is then a right-split
short exact sequence of groups
  \[
  \xymatrix{
  0 \ar[r]
        & (k^\times)^2 \ar[r]
        & \Aut(A) \ar[r]
        & H \ar[r]
        & 1
  }\]

\item Let $A=\A(k[h^{\pm1}],a,q)$ with $a=\alpha h^N$ and $N\in\ZZ$.
If $q\neq-1$, let $H$ be $\SL_2(\ZZ)$ if $q\neq-1$ and $\GL_2(\ZZ)$
otherwise.  There is then a right-split short exact sequence of groups
  \[
  \xymatrix{
  0 \ar[r]
        & (k^\times)^2 \ar[r]
        & \Aut(A) \ar[r]
        & H \ar[r]
        & 1
  }\]

\end{thmlist}
\end{Theorem}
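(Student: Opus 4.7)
The plan is to exploit the fact that when $a$ is a unit, $A$ becomes essentially a (skew) Laurent polynomial ring whose units are easily described. In case \thmitem{1}, the relations $yx=xy=a\in k^\times$ force $x=ay^{-1}$, so $A$ is the skew Laurent extension $k[h][y^{\pm 1};\sigma]$ with $\sigma(h)=q^{-1}h$; in case \thmitem{2}, $a=\alpha h^N$ is a unit, $x=\alpha h^N y^{-1}$, and $A$ is the quantum torus $k_q[y^{\pm 1},h^{\pm 1}]$. Since $A$ is a $\ZZ$-graded domain in both cases, its units are homogeneous, and a direct analysis of each weight component yields $A^\times=k^\times\cdot y^{\ZZ}$ in case \thmitem{1} and $A^\times=k^\times\cdot y^{\ZZ}h^{\ZZ}$ in case \thmitem{2}.

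Any automorphism $\eta$ sends units to units. In case \thmitem{2} this forces $\eta(y)=\mu y^a h^b$ and $\eta(h)=\lambda y^c h^d$ for $(\mu,\lambda)\in(k^\times)^2$ and $M=\psm{a&c\\b&d}\in\GL_2(\ZZ)$; the identity $h^d y^a=q^{ad}y^a h^d$ reduces $\eta(h)\eta(y)=q\eta(y)\eta(h)$ to $q^{\det M-1}=1$, forcing $\det M=1$ always and permitting $\det M=-1$ exactly when $q=-1$. In case \thmitem{1}, $\eta(y)=\mu y^\epsilon$ with $\epsilon=\pm 1$, and writing $\eta(h)=\sum_i p_i(h)y^i$ the commutation relation with $\eta(y)$ yields $p_i(qh)=q^{\pm 1}p_i(h)$; combining the constraints across the $p_i$, one finds $\eta(h)=h^r\cdot F(h^e,y)$ with $F\in k[h^e,y^{\pm 1}]$, where $e=\operatorname{ord}(q)\in\{2,3,\ldots,\infty\}$ and $r\in\{1,e-1\}$ according to whether $\epsilon=1$ or $\epsilon=-1$.

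To pin down $\eta(h)$ in case \thmitem{1}, invoke bijectivity of $\eta$. In the $\epsilon=1$ subcase, the same analysis applied to $\eta^{-1}$ gives $\eta^{-1}(h)=h F'(h^e,y)$, and the identity $h=\eta(\eta^{-1}(h))$ evaluates, after canceling $h$ on the left and using that $\eta(h^e)=h^e\cdot\text{Norm}(F)$ is central, to the equation $F\cdot F'(h^e\,\text{Norm}(F),\mu y)=1$ in the UFD $k[h^e,y^{\pm 1}]$; thus $F$ is a unit in this ring, so $F=\lambda y^m$ and $\eta(h)=\lambda h y^m$. In the $\epsilon=-1$ subcase, iterating shows that $\eta(h^{e-1})=\eta(h)^{e-1}$ has $h$-degree $(e-1)^2$, so the consistency condition $\eta(\eta^{-1}(h))=h$ of $h$-degree $1$ forces $(e-1)^2\leq 1$, i.e., $e=2$, which means $q=-1$; the same unit argument then yields $\eta(h)=\lambda h y^m$ up to sign.

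Finally, define $\pi:\Aut(A)\to H$ by sending $\eta$ to its exponent matrix. The reordering formula makes $\pi$ a group homomorphism whose kernel consists of automorphisms $\eta(y)=\mu y$, $\eta(h)=\lambda h$, naturally isomorphic to $(k^\times)^2$; surjectivity follows from the analysis. For the right-splitting, lift generators of $H$ explicitly: in case \thmitem{1}, $T=\psm{1&1\\0&1}$ lifts via $y\mapsto y$, $h\mapsto hy$, and the reflection $\psm{-1&0\\0&1}$ (present only when $q=-1$) lifts via $y\mapsto a y^{-1}$, $h\mapsto -h$; in case \thmitem{2}, one additionally lifts $S=\psm{0&-1\\1&0}$ via $y\mapsto h$, $h\mapsto y^{-1}$. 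The main obstacle is that naïve lifts satisfy the relations of $H$ only up to scalars in $(k^\times)^2$---for example, in case \thmitem{2} the naïve $(ST)^3$ differs from $S^2$ by a factor of $q^{-1}$---so one must show that the resulting $2$-cocycle with values in $(k^\times)^2$ vanishes. This is a finite verification using the standard presentations of $\SL_2(\ZZ)$ (by $S$ and $T$ with $S^4=1$ and $(ST)^3=S^2$) and $\GL_2(\ZZ)$, in each case the $(k^\times)^2$-rescaling freedom being sufficient to make every relation hold exactly.
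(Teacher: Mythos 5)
Your proof is correct in substance, and for part \thmitem{1} it takes a genuinely different route from the paper; for part \thmitem{2} it is essentially the paper's argument (units of the quantum torus, the induced automorphism of $A^\times/k^\times\cong\ZZ^2$, the determinant condition $q^{\det M-1}=1$, and the kernel $(k^\times)^2$). For part \thmitem{1} the paper never computes inside the skew Laurent presentation: it reuses the machinery of Propositions~\ref{prop:lfd} and~\ref{prop:euler}, i.e.\ the two commuting locally finite derivations $\xi=y\der{y}-x\der{x}$ and $\tau=h\der{h}$ make $A$ a $\ZZ^2$-graded algebra with one-dimensional homogeneous components, an automorphism induces $M\in\GL_2(\ZZ)$ preserving the weight semigroup generated by $(\pm1,0)$ and $(0,1)$, whence $M=\psm{\epsilon&\ell\\0&1}$ and at once $\eta(x)\doteq x^\epsilon$, $\eta(h)\doteq hx^\ell$; your route instead pins down $\eta(h)$ by hand in $k[h][y^{\pm1};\sigma]$ from the unit group and the commutation constraints, which is more elementary but is exactly where you must be careful: your exclusion of $\epsilon=-1$ when $q$ has order $e\geq 3$ is stated too loosely, since applying $\eta$ to the \emph{sum} $\eta^{-1}(h)=\sum_i p_i(h)y^i$ could a priori lose top $h$-degree by cancellation. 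The clean fix is the one you half-indicate: the same structural analysis applied to $\eta^{-1}$ (which also has $\epsilon=-1$) gives $\eta^{-1}(h)=h^{e-1}G'(h^e,y)$, so $h=\eta(\eta^{-1}(h))=\eta(h)^{e-1}\,\eta(G')$ is a \emph{product}, and additivity of the $h$-degree (leading $h$-forms multiply without cancellation in this domain) yields $1\geq(e-1)^2$, hence $e=2$. On the splitting you are in fact more careful than the paper, which only says a section ``can easily be constructed'': the cocycle issue you raise is real --- for instance your reflection lift $y\mapsto ay^{-1}$, $h\mapsto -h$ conjugates the translation lift to $T^{-1}$ only up to the scalar $a$ --- and the rescaling freedom does suffice: in case \thmitem{1} take $y\mapsto y^{-1},\,h\mapsto h$ and $y\mapsto y,\,h\mapsto hy$, and in case \thmitem{2} the lifts $h\mapsto x,\ x\mapsto qh^{-1}$ of $S$ and $h\mapsto x,\ x\mapsto h^{-1}x$ of $ST$ satisfy $S^4=1$, $(ST)^6=1$, $S^2=(ST)^3$ exactly, giving a section of $\SL_2(\ZZ)$; only the $\GL_2(\ZZ)$ verification for $q=-1$ (adjoining an involutive lift of a reflection compatible with these) is a finite check that you should actually write out rather than assert. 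With these two points tightened, your argument is a complete and somewhat more self-contained alternative to the paper's, at the cost of losing the uniform grading argument that makes the paper's determination of $\eta(h)$ immediate.
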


\begin{proof}
In both cases the algebra $A$ is generated by $x$ and~$h$, because $a$
is a unit.

\thmitem{1} We proceed exactly as in the beginning of the proof of
Proposition~\ref{prop:euler}. Given an automorphism $\eta:A\to A$,
this constructs a matrix $M\in\GL_2(\ZZ)$ such that for all
$\lambda\in\ZZ^2$ and $u\in A^\lambda$, we have that $\eta^{-1}(u)\in
A^{M\lambda}$ and which therefore preserves the subsemigroup
$\Lambda\subseteq\ZZ^2$ which, in this case, is generated by
$(\pm1,0)$ and $(0,1)$; we remark that in this situation the semigroup
$\Lambda$ does not have indecomposable elements, so that the argument
given in the proof of Proposition~\ref{prop:euler} cannot be
continued. In any case, as $M$ preserves~$\Lambda$, we must have
$M=\begin{psmallmatrix}\epsilon&\ell\\0&1\end{psmallmatrix}$ for some
$\epsilon\in\{\pm1\}$ and $\ell\in\ZZ$. Since $A^\lambda$ is
one-dimensional for all $\lambda\in\Lambda$, this implies that
  \begin{align}
  &\eta^{-1}(x)\doteq x^\epsilon, &&\eta^{-1}(h)\doteq hx^\ell. \label{eq:eta}
  \end{align}
From the relation $xh=qhx$ we see that if $q\neq-1$ we must have
$\epsilon=1$, so that $M\in H$. In this way we obtain a morphism of
groups $\pi:\Aut(A)\to H$, and it is easy to see that it is surjective
and right-split ---one can use formulas~\eqref{eq:eta} to construct a
section. The kernel of~$\pi$, isomorphic to~$(k^\times)^2$, can be
identified at once.

\thmitem{2} There an obvious group homomorphism
$\pi:\Aut(A)\to\Aut(A^\times/k^\times)$. Since $A$ is generated by $x$
and $h$, which are units, the kernel of~$\pi$ is easily seen to be the
group of automorphisms which multiply those generators by non-zero
scalars and therefore isomorphic to~$(k^\times)^2$. 

It is easy to see that $A^\times/k^\times$ is an abelian group freely
generated by the classes of~$h$ and~$x$, so we can identify it
with~$\ZZ^2$. If $\eta:A\to A$ is an automorphism and
$M=\pi(\eta)=\psm{m_{1,1}&m_{1,2}\\m_{2,1}&m_{2,2}}$, we must have
  \begin{align}
  &\eta(h) \doteq h^{m_{1,1}}x^{m_{2,1}},
  &\eta(x) \doteq h^{m_{1,2}}x^{m_{2,2}}. \label{eq:eta-2}
  \end{align}
From the $q$-commutation relation between $h$~and~$x$, we see that
necessarily $\det M=1$ if~$q\neq1$; this means that, in any case, $M$
is in the subgroup~$H$ and $\pi$ can be corestricted to a morphism
$\Aut(A)\to H$. Using formulas~\eqref{eq:eta-2} we can easily
construct a section for this map, thereby finishing the proof of the
theorem.
\end{proof}

An argument completely parallel to that of this proof
establishes the following final result. We omit the details.

\begin{Theorem}
Let $D$ be $k[h]$ or $k[h^{\pm1}]$, let $a_1$,~$a_2\in D$ be two
units, and let $q_1$,~$q_2\in k\setminus\{0,1\}$. The algebras
$\A(D,a_1,q_1)$ and $\A(D,a_2,q_2)$ are isomorphic
iff~$q_2\in\{q_1,q_1^{-1}\}$.~\qed
\end{Theorem}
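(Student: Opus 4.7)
My plan is to follow the template of the proof of the preceding theorem, the only new ingredient being a single $q$-commutation calculation that converts the matrix-theoretic information extracted there into a constraint relating $q_1$ and $q_2$.

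Sufficiency is routine. When $a$ is a unit, a rescaling of the generator $y$ shows that the isomorphism class of $\A(D,a,q)$ depends on $a$ only in a harmless way: in the polynomial case $a$ may be normalized to $1$, and in the Laurent case to $h^N$ for some $N\in\ZZ$. For $q_2=q_1$ the resulting normal forms agree; for $q_2=q_1^{-1}$ one constructs an isomorphism by swapping $y$ and $x$ or by inverting $x$, combined with suitable rescaling, in the same spirit as the involutory automorphism $\Omega$ appearing in the preceding theorem.

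For necessity, let $\eta:A_1\to A_2$ be an isomorphism. I repeat verbatim the analysis of the preceding proof to produce a matrix $M\in\GL_2(\ZZ)$ recording how $\eta$ acts on the appropriate $\ZZ^2$-grading (in the polynomial case, via the locally finite derivations $\xi_i$ and $\tau_i=h\der{h}$, exactly as in the proof of Proposition~\ref{prop:euler}) or on the unit group $A^\times/k^\times\cong\ZZ^2$ (in the Laurent case). In either case this yields explicit formulas, up to nonzero scalars, for $\eta(h_1)$ and $\eta(x_1)$ as monomials in $h_2$ and $x_2$. In the polynomial case, preservation of the semigroup $\Lambda=\ZZ\times\NN$ further forces $M=\psm{\epsilon & \ell\\0 & 1}$ with $\epsilon\in\{\pm1\}$ and $\ell\in\ZZ$.

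The final step is a one-line computation: apply $\eta$ to the relation $x_1h_1=q_1h_1x_1$, expand both sides in $A_2$, and use $x_2h_2=q_2h_2x_2$ to push all the $x_2$ factors past the $h_2$ factors in normal form. Matching the resulting scalar prefactors one reads off the identity $q_1=q_2^{\det M}$ in the Laurent case, or equivalently $q_1=q_2^{\epsilon}$ in the polynomial case, and since $\det M,\epsilon\in\{\pm1\}$ this gives $q_2\in\{q_1,q_1^{-1}\}$. No serious obstacle is anticipated: the substantive structural work has already been done in the proof of the previous theorem, and what remains is this bookkeeping check of the exponent of $q_2$ after normal-ordering.
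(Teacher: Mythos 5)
Your necessity argument is correct and is precisely the ``completely parallel'' argument the paper has in mind: run the beginning of the proof of Proposition~\ref{prop:euler} (polynomial case, using $\xi_i$ and $\tau_i=h\der{h}$, with $\Lambda=\ZZ\times\NN$ forcing $M=\psm{\epsilon&\ell\\0&1}$) or the action on $A^\times/k^\times\cong\ZZ^2$ (Laurent case), and then apply the isomorphism to $x_1h_1=q_1h_1x_1$ and normal-order; the scalar bookkeeping does give $q_1=q_2^{\det M}$ with $\det M=\pm1$, hence $q_2\in\{q_1,q_1^{-1}\}$.

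There is, however, a gap in your sufficiency sketch for $D=k[h^{\pm1}]$. The theorem imposes no relation whatsoever between $a_1$ and $a_2$ beyond both being units, so you must produce an isomorphism $\A(k[h^{\pm1}],q,\alpha_1h^{N_1})\cong\A(k[h^{\pm1}],q,\alpha_2h^{N_2})$ also when $N_1\neq N_2$; a scalar rescaling of $y$ only normalizes the coefficients $\alpha_i$, so your normal forms are $h^{N_1}$ and $h^{N_2}$ and the claim that ``the resulting normal forms agree'' fails. The repair is short: since $a$ is a unit, both $yx=a(h)$ and $xy=a(qh)$ are units, so $x$ is invertible and $y=a(h)x^{-1}$; hence $\A(k[h^{\pm1}],q,a)$ is the quantum torus generated by $h^{\pm1}$ and $x^{\pm1}$ with $xh=qhx$, independently of the unit $a$. (Equivalently, rescale $y$ by a unit of $D$ rather than a scalar: $h\mapsto h$, $x\mapsto x$, $y\mapsto q^{N_1-N_2}\,y\,h^{N_1-N_2}$ defines an isomorphism $\A(k[h^{\pm1}],q,h^{N_1})\to\A(k[h^{\pm1}],q,h^{N_2})$.) With this observation the rest of your sufficiency argument, including the exchange of $y$ and $x$ to pass from $q$ to $q^{-1}$, goes through.
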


\begin{bibdiv}
\begin{biblist}

\bib{AD}{article}{
   author={Alev, J.},
   author={Dumas, F.},
   title={Sur le corps des fractions de certaines alg\`ebres quantiques},
   journal={J. Algebra},
   volume={170},
   date={1994},
   number={1},
   pages={229--265},
   issn={0021-8693},
   review={\MR{1302839 (96c:16033)}},
}

\bib{AD:2}{article}{
   author={Alev, J.},
   author={Dumas, F.},
   title={Rigidit\'e des plongements des quotients primitifs minimaux de
   $U_q({\rm sl}(2))$ dans l'alg\`ebre quantique de Weyl-Hayashi},
   journal={Nagoya Math. J.},
   volume={143},
   date={1996},
   pages={119--146},
   issn={0027-7630},
   review={\MR{1413010 (97h:17014)}},
}

\bib{B}{article}{
   author={Bavula, V. V.},
   title={Generalized Weyl algebras and their representations},
   journal={Algebra i Analiz},
   volume={4},
   date={1992},
   number={1},
   pages={75--97},
   issn={0234-0852},
   translation={
      journal={St. Petersburg Math. J.},
      volume={4},
      date={1993},
      number={1},
      pages={71--92},
      issn={1061-0022},
   },
   review={\MR{1171955 (93h:16043)}},
}

\bib{BJ}{article}{
   author={Bavula, V. V.},
   author={Jordan, D. A.},
   title={Isomorphism problems and groups of automorphisms for generalized
   Weyl algebras},
   journal={Trans. Amer. Math. Soc.},
   volume={353},
   date={2001},
   number={2},
   pages={769--794},
   issn={0002-9947},
   review={\MR{1804517 (2002g:16040)}},
}

\bib{BR}{article}{
   author={Benkart, Georgia},
   author={Roby, Tom},
   title={Down-up algebras},
   journal={J. Algebra},
   volume={209},
   date={1998},
   number={1},
   pages={305--344},
   issn={0021-8693},
   review={\MR{1652138 (2000e:06001a)}},
}

\bib{BD}{article}{
   author={Blanc, J{\'e}r{\'e}my},
   author={Dubouloz, Adrien},
   title={Automorphisms of $\mathbb{A}^{1}$-fibered affine surfaces},
   journal={Trans. Amer. Math. Soc.},
   volume={363},
   date={2011},
   number={11},
   pages={5887--5924},
   issn={0002-9947},
   review={\MR{2817414 (2012h:14150)}},
}

\bib{FSS}{article}{
   author={Farinati, M. A.},
   author={Solotar, A.},
   author={Su{\'a}rez-{\'A}lvarez, M.},
   title={Hochschild homology and cohomology of generalized Weyl algebras},
   journal={Ann. Inst. Fourier (Grenoble)},
   volume={53},
   date={2003},
   number={2},
   pages={465--488},
   issn={0373-0956},
   review={\MR{1990004 (2004d:16014)}},
}

\bib{J}{article}{
   author={Jordan, David A.},
   title={Down-up algebras and ambiskew polynomial rings},
   journal={J. Algebra},
   volume={228},
   date={2000},
   number={1},
   pages={311--346},
   issn={0021-8693},
   review={\MR{1760967 (2001c:16057)}},
}

\bib{H}{article}{
   author={Hodges, Timothy J.},
   title={Noncommutative deformations of type-$A$ Kleinian singularities},
   journal={J. Algebra},
   volume={161},
   date={1993},
   number={2},
   pages={271--290},
   issn={0021-8693},
   review={\MR{1247356 (94i:14038)}},
}

\bib{ML}{article}{
   author={Makar-Limanov, L.},
   title={On groups of automorphisms of a class of surfaces},
   journal={Israel J. Math.},
   volume={69},
   date={1990},
   number={2},
   pages={250--256},
   issn={0021-2172},
   review={\MR{1045377 (91b:14059)}},
}

\bib{OP}{article}{
   author={Osborn, J. Marshall},
   author={Passman, D. S.},
   title={Derivations of skew polynomial rings},
   journal={J. Algebra},
   volume={176},
   date={1995},
   number={2},
   pages={417--448},
   issn={0021-8693},
   review={\MR{1351617 (97i:16030)}},
   doi={10.1006/jabr.1995.1252},
}

\bib{RS}{article}{
   author={Richard, Lionel},
   author={Solotar, Andrea},
   title={Isomorphisms between quantum generalized Weyl algebras},
   journal={J. Algebra Appl.},
   volume={5},
   date={2006},
   number={3},
   pages={271--285},
   issn={0219-4988},
   review={\MR{2235811 (2007j:16076)}},
}

\bib{SSAV}{article}{
   author = {{Solotar}, A.},
   author = {Su{\'a}rez-Alvarez, M.},
   author = {Vivas, Q.},
    title = {Hochschild homology and cohomology of Generalized Weyl algebras: the quantum case},
  journal = {ArXiv e-prints},
   eprint = {1106.5289},
     date = {2011},
}

\end{biblist}
\end{bibdiv}
\end{document}